\newtheorem{lthm}{Theorem}
\newtheorem{theorem}{Theorem}
\newtheorem*{theorem*}{Theorem}
\newtheorem{definition}{Definition}
\newtheorem{proposition}{Proposition}
\newtheorem{corollary}{Corollary}
\newtheorem{lemma}{Lemma}
\newtheorem{example}{Example}
\newtheorem{conjecture}{Conjecture}
\newtheorem*{conjecture*}{Conjecture}
\journal{arXiV}
\begin{document}

\begin{frontmatter}


\title{Knots inside Fractals}



\author{Joshua Broden}
\ead{jbbroden@uwaterloo.ca, University of Waterloo}

\author{Malors Espinosa}
\ead{srolam.espinosalara@mail.utoronto.ca, Department of Mathematics, University of Toronto}

\author{Noah Nazareth}
\ead{ncnazare@uwaterloo.ca, University of Waterloo}

\author{Niko Voth}


\begin{abstract}
We prove that all knots can be embedded into the Menger Sponge fractal. We prove that all Pretzel knots can be embedded into the Sierpinski Tetrahedron. Then we compare the number of iterations of each of these fractals needed to produce a given knot as a mean to compare the complexity of the two fractals.\\
Mathematics Subject Classification: 57K10, 28A80\\
Word count: 4775

\end{abstract}
\end{frontmatter}


\section{Introduction and Motivation}\label{sec: introduction}

\subsection{The questions of this paper}

The Menger Sponge is a fractal obtained by iteratively subdividing a cube into $27$ equal cubes and removing the central cube of each face and the interior central cube.  In 1926, Menger proved  the following 

\begin{theorem*}[Menger, 1926]
    The Menger Sponge is universal for all compact one dimensional topological spaces.
\end{theorem*}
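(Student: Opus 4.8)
The plan is to combine two classical facts from dimension theory with the self-similar combinatorial structure of the sponge. First, since $X$ is compact metrizable with covering dimension $\le 1$, every finite open cover of $X$ has a finite open refinement of order $\le 2$ (no point lies in more than two of the sets); taking a cofinal sequence of such covers with mesh tending to $0$ and passing to nerves, one obtains that $X$ is homeomorphic to an inverse limit $\varprojlim(K_i,p_i)$ in which each $K_i$ is a finite graph (the nerve has dimension $\le 1$ precisely because the cover has order $\le 2$) and each bonding map $p_i\colon K_{i+1}\to K_i$ is the simplicial map induced by refinement. This reduces the theorem to realizing such an inverse system of finite graphs, together with its bonding maps, inside the Menger sponge $M$.

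Second, I would use that $M=\bigcap_n M_n$, where $M_n$ is the union of the $20^n$ retained closed subcubes of side $3^{-n}$, and that for each retained subcube $Q$ of any level the set $M\cap Q$ is a scaled copy of $M$ itself, built in turn from $20$ such scaled copies glued along shared faces. The geometric core of the argument is a \emph{routing lemma}: given a retained subcube $Q$, a finite graph $G$, and an assignment of some of the vertices of $G$ to prescribed ``ports'' on the faces of $Q$, there is an embedding of $G$ into $M\cap Q$ that respects this assignment and uses only subcubes of some deeper level $n+m(G)$; moreover, the embeddings produced for the finitely many pieces sharing a common face of $Q$ can be made to agree exactly on that face and to be otherwise disjoint. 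This rests on the connectedness and local connectedness of the sponge, on the existence of arcs inside $M$ joining suitable pairs of faces of a retained subcube, on the fact that a branch point of arbitrary finite valence embeds in $M$, and on the $20$-fold branching present at every scale, which supplies enough pairwise disjoint routes.

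Granting the routing lemma, the embedding $X\hookrightarrow M$ is built by induction. Embed $|K_1|$ into $M$ using the lemma with $Q$ the whole cube. Inductively, suppose $|K_i|$ has been embedded so that the closed star of every simplex lies in a single retained subcube of some level $n_i$; apply the routing lemma inside each such subcube to embed the portion of $|K_{i+1}|$ lying over that simplex at a level $n_{i+1}>n_i$, matching along shared faces, and arranged so that every point of $|K_{i+1}|$ lands in the retained subcube that contains the image of its $p_i$-image. Letting $n_i\to\infty$ so that the subcube diameters shrink to $0$, the successive maps stabilize to a map $f\colon \varprojlim(K_i,p_i)\to M$; that $f$ is a homeomorphism onto its image follows because distinct points of the inverse limit are eventually separated by the covers and hence sent into disjoint nested subcubes, while continuity of $f$ and of $f^{-1}$ comes from the shrinking mesh.

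The main obstacle is the routing lemma together with making it compatible with the bonding maps. Because the $p_i$ are far from injective --- they collapse edges and identify vertices --- each inductive step is not just the embedding of a larger graph but a controlled subdivision and re-routing of the previous embedding inside shrinking subcubes, all while preserving injectivity. Guaranteeing that the needed arcs exist, terminate at the correct ports, and remain disjoint except where they are forced to coincide is the entire difficulty, and it is exactly here that the order-$\le 2$ hypothesis (equivalently $\dim X\le 1$) enters: it is what keeps the nerves one-dimensional and the re-routing problem a problem about arcs and finite-valence branch points rather than about surfaces, which the sponge could not accommodate.
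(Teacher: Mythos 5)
First, a point of comparison: the paper does not prove this statement. It is quoted as a classical theorem of Menger (1926), with a pointer to the literature for discussion, and it serves purely as motivation --- indeed the authors immediately stress that universality does \emph{not} imply their Theorem A, since an embedded copy of $\mathbb{S}^1$ need not preserve knot type. So there is no proof in the paper to measure yours against; what follows is an assessment of your outline on its own terms.

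Your outline follows the classical route: a Freudenthal-type expansion gives $X \cong \varprojlim(K_i,p_i)$ with each $K_i$ a finite graph (the order-$\le 2$ refinements being exactly where $\dim X \le 1$ enters), and one then realizes this inverse system inside $M$ via self-similarity. That is a sound strategy, and you have correctly located the difficulty, but as written this is a plan rather than a proof, with two genuine gaps. (i) The routing lemma is asserted, not established: ``connectedness and local connectedness'' plus ``$20$-fold branching'' do not by themselves produce pairwise disjoint arcs with prescribed endpoints on prescribed faces. One needs something quantitative about the face-to-face channels of a retained subcube --- the paper's Lemma 1 (straight segments over Cantor-dust points survive in $M$) is precisely the kind of statement required, together with a count showing there are enough disjoint channels for the valences that occur. (ii) The passage to the limit is where naive versions of this argument actually fail. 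Because the $p_i$ are not injective, you must re-embed $|K_{i+1}|$ so that its image refines the image of $|K_i|$ subcube-by-subcube, then show the induced maps $\varprojlim K_j \to M$ are uniformly Cauchy \emph{and} that the limit is injective; injectivity requires an explicit clause, absent from your routing lemma, guaranteeing that points separated at stage $i$ are routed into subcubes that remain disjoint at every later stage. Until (i) and (ii) are carried out the theorem is not proved; if you only need a complete argument on record, Bestvina's characterization of the Menger curve, or the general-position argument in N\"obeling--Menger spaces, bypasses the hand-built routing entirely.
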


The concept of \textit{universality} in the previous theorem means that any compact topological space of dimension $1$ has a homeomorphic copy as a subspace of the Menger Sponge. We refer the reader to \cite[Chapter 2]{chaosandfractals} for a discussion on these topics. 

A particular example of such space is the circle $\mathbb{S}^1$. By inspecting the Menger Sponge, or its first iterations, we can see that the circle can be found as a subspace in many different ways, some rather complicated. In particular, some of them are actually knotted. This is what motivates our two questions:

\begin{description}
    \item[Question 1:] Can we find every knot as a subspace of the Menger Sponge? More particularly, can we find every knot in the $1$-skeleton (that is, the edges of the boundary) of a finite iteration of the Menger Sponge?
    
    By this we mean that we do not only want $\mathbb{S}^1$ embedded in any which way but in a given way that is isotopic to a given knot and created by the edges of the boundary of a finite iteration. The reason for this choice is that, otherwise, we cannot guarantee points on it do not get removed at further stages of the iterative process.

    \item[Question 2:] Can we find fractals, also created by iterative processes, where we can also find all the knots in their finite iterations? We would expect that more complicated fractals produce complicated knots faster than simpler fractals.
\end{description}
Notice that question 1 does not follow from universality, since all knots are homeomorphic to $\mathbb{S}^1$. Thus, the embedding in the Menger Sponge might not preserve the knot type!  

\subsection{Results of this paper}

The results we will prove on this paper, with regard to the two questions posed above are as follows. We completely settle the first question:

\begin{lthm}
    Any knot $K$ can be embedded in a finite iteration of the Menger Sponge.
\end{lthm}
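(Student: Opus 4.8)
The plan is to pass from topology to combinatorics in two moves: first replace $K$ by a model that lives on a cubical lattice, and then transplant that model into the edge‑graph of a sufficiently deep iteration of the Menger Sponge without changing its knot type. For the first move I would invoke the classical fact that every knot is a \emph{lattice knot}: it is isotopic in $\mathbb{R}^3$ to a simple closed polygon all of whose edges are axis‑parallel, i.e.\ to a cycle in the $1$‑skeleton of the integer grid $\mathbb{Z}^3$. A convenient concrete source of such a model is a grid diagram (equivalently, an arc presentation) of $K$: it produces a representative lying in the edge‑graph of $\{0,\dots,g\}^2\times\{0,1\}$, $g$ the grid number, built from axis‑parallel segments in two horizontal sheets joined by vertical struts --- and it has the virtue of being economical, which matters for the quantitative comparison promised in the abstract. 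Fix such a model $\kappa$, contained in a grid box $[0,N]^3$.

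For the second move one must first understand the edge‑graph of the Sponge. The key elementary observation is that \emph{the $1$‑skeleton of the first iteration of the Menger Sponge is the complete $3\times3\times3$ grid graph on the $64$ vertices $\{0,1,2,3\}^3$}: the only sub‑cubes deleted are those whose index has at least two coordinates equal to $1$, and a short case check shows that every unit edge of the subdivision borders some sub‑cube avoiding that condition, hence survives. By self‑similarity this yields, inside the $n$‑th iteration, a full $3\times3\times3$ grid inside every surviving level‑$(n-1)$ sub‑cube, and these grids agree along shared faces; equivalently, the $n$‑th iteration contains the whole fine grid on $[0,3^n]^3$ except for a set of ``missing'' edges confined to the interiors of the deleted regions.

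The core of the argument is then the routing step: given $\kappa\subset[0,N]^3$, choose $n$ large, place a scaled copy of $\kappa$ in $[0,3^n]^3$, and realize it by a cycle in the surviving edges that is ambient isotopic to $\kappa$. The idea is to keep the few locally intricate pieces of $\kappa$ --- in the grid‑diagram model, the crossings --- inside individual surviving sub‑cubes, where a full fine grid is available, and to route the long ``parallel'' portions of $\kappa$ along corridors of surviving sub‑cubes joined across their intact shared faces; whenever the intended path would traverse a missing edge, one detours through nearby surviving edges. Since such detours can always be kept inside a fixed tubular neighborhood of $\kappa$ and chosen so as not to create new crossings, the isotopy type is preserved, and taking $n$ large makes the grid fine enough that every required detour and corridor fits.

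I expect the main obstacle to be exactly the bookkeeping of this last step: checking that the corridors between surviving sub‑cubes are ``wide enough'' to carry the parallel strands of the model, and that the detours around missing edges neither collide with one another nor alter the over/under data --- in short, that the partly punctured grid of the Sponge still supports an isotopic copy of $\kappa$. An alternative packaging of the same routing, which may make the building blocks more explicit (and more amenable to counting iterations), is to start from Alexander's theorem, present $K$ as a closed braid, and assemble it from a single ``crossing gadget'' placed one per surviving sub‑cube, together with connector arcs between consecutive gadgets and arcs realizing the braid closure.
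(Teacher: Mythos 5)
Your first move --- passing to a grid/arc presentation and modelling $K$ as axis-parallel segments in two parallel sheets joined by struts --- is exactly the paper's starting point, and your observation that the $1$-skeleton of $M_1$ retains every unit edge of the $3\times3\times3$ subdivision is correct. But the second move, which is the actual content of the theorem, is left as a sketch: you describe routing along ``corridors of surviving sub-cubes'' and detouring around missing edges ``inside a fixed tubular neighborhood of $\kappa$,'' and you yourself flag the bookkeeping of this step as the main obstacle. That obstacle is not incidental --- verifying that the detours exist, terminate, do not collide, and do not alter the over/under data is precisely what must be proved, and nothing in the proposal establishes it. As written, the argument asserts the conclusion of the hard step rather than proving it.

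The idea that closes this gap in the paper is to make the detours unnecessary altogether by exploiting the ternary self-similarity of the Sponge. One places the $n$ horizontal and vertical lines of the grid diagram not at evenly spaced coordinates but at endpoints of a sufficiently deep iteration of the Cantor set (the $k$-th iteration has $2^{k+1}$ endpoints, so $k\approx\log_2 n$ suffices); this shift preserves the knot type since the order of the lines is unchanged. Then: (i) by the ternary characterization of the Sierpi\'nski carpet, a full segment $\{x_0\}\times[0,1]$ lies on the front face whenever $x_0$ is a Cantor point, so the vertical segments live on the front face and the horizontal ones on the back face; and (ii) the key lemma that if $(x_0,y_0)$ is a Cantor Dust point then the entire straight segment $(x_0,y_0,z)$, $0\le z\le 1$, lies in $M$ --- it threads through every iteration without meeting a hole. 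The connecting struts are therefore straight lines requiring no detour at any scale, and isotopy preservation is immediate because the projection to the front face recovers the arc presentation. If you want to salvage your routing approach instead, you would need to supply a genuine combinatorial argument (connectivity of the surviving edge graph is easy; controlling the isotopy class of many interacting detours is not), whereas the Cantor-coordinate placement eliminates the problem by construction and also yields the quantitative bound $M(K)\le\log_2(\alpha(K))+1$ that you anticipated wanting.
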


To prove the above result we will use the Arc Presentation (in grid form) of $K$ and its associated connectivity graph. Concretely, we will find the connectivity graph on the face of a Menger Sponge and then prove that we can resolve each intersection, by pushing it into the sponge in such a way that we avoid all the holes.  As a consequence of the proof of the above result we will deduce the well known fact that the isotopy classes of tame knots form a countable set.

To discuss question 2 we explore another fractal, the Sierpinski Tetrahedron. It is also created by an iterative process, which we describe below in section \ref{sec: review of prereq}. The strategy used before, where we found the connectivity graph on a face and then push into the fractal, doesn't work as there seem to be no appropriate knot diagram suitable for this.

Instead, we introduce the combinatorial diagram of the tetrahedron, which is a  two dimensional map of the fractal that allow us to look either by inspection or by structure of certain families of knots, in the map and transfer into the tetrahedron. We prove

\begin{lthm}
   Any pretzel knot $K$ can be embedded in a finite iteration of the Sierpinski Tetrahedron.
\end{lthm}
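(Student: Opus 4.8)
The plan is to argue entirely inside the combinatorial diagram of the Sierpinski Tetrahedron and to realize on it the standard planar diagram of a pretzel knot. First I would pin down how an over/under crossing is recorded in this two dimensional map. The surface of a regular tetrahedron unfolds to a planar net of four triangles, and after $k$ iterations each of these triangles carries a level-$k$ Sierpinski gasket whose edges lie in the $1$-skeleton of the fractal; when the net is folded back into $\mathbb{R}^3$, two arcs that meet at a point of the diagram become disjoint precisely when, near that point, one of them has been pushed onto a face of the tetrahedron different from the one carrying the other, and the cyclic order of the four faces then fixes the sign of the crossing. So the combinatorial diagram behaves like an ordinary knot diagram, except that the room to pass one strand over another comes from the non-planarity of the folded net rather than from an abstract extra coordinate; the task is to draw $P(p_1,\dots,p_n)$ so that the resulting $\mathbb{R}^3$ picture is faithful.

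Next I would recall the standard diagram of $P(p_1,\dots,p_n)$: $n$ vertical twist regions side by side, the $i$-th consisting of $|p_i|$ half-twists of a fixed sign, joined by $2n$ horizontal closure arcs at the top and bottom. The core of the argument is a single lemma: a twist region carrying any prescribed number $p$ of half-twists, of either sign, embeds in the $1$-skeleton of a deep enough iteration. The idea is to use self-similarity. Two strands can be run through one sub-tetrahedron of $T_k$ so as to clasp once by sending the ``over'' strand around one edge of that sub-tetrahedron and across a neighbouring face, the sign being controlled by which of the two available faces is used; iterating this down the gasket on a single face of the tetrahedron produces a chain of such crossings, and the level-$\ell$ gasket has on the order of $3^{\ell}$ slots for them, so choosing $\ell$ with $3^{\ell}\ge\max_i|p_i|$ accommodates every twist region at once. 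Different twist regions are placed inside different small triangles of the gasket on the faces of $T_k$; since there are on the order of $3^{k}$ of these, taking $3^{k}\ge n$ leaves room for all $n$ regions.

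With the twist regions placed, I would route the $2n$ closure arcs along a ``highway'' of edges: each original edge of the tetrahedron survives, subdivided, in the $1$-skeleton at every iteration, so arcs can be carried along such edges from one twist region to the next and closed off at the top and bottom. Because these arcs are kept on a single face while the twist regions alternate faces, they meet the rest of the diagram only at the prescribed endpoints; passing to one further iteration, if needed, supplies enough parallel edges for the $n$ arcs to run side by side without colliding. Assembling the pieces reproduces exactly the diagram of $P(p_1,\dots,p_n)$, now drawn in the $1$-skeleton of $T_\ell$ with $\ell = O(\log\max_i|p_i|) + O(\log n)$, and the resulting bound on the number of iterations is what feeds the complexity comparison in the final section.

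The step I expect to be the main obstacle is the faithful resolution of crossings: one must verify that folding the net into the tetrahedron really does separate each designed pair of arcs with the intended sign, and simultaneously that the connecting and closure arcs can be threaded through the self-similar structure so that the only intersections in $\mathbb{R}^3$ are the crossings we put in — this is the analogue, for the tetrahedron, of the ``avoid all the holes'' argument used for the Menger Sponge. A secondary point requiring care is parity: $P(p_1,\dots,p_n)$ is a knot rather than a multi-component link only for suitable parities of $n$ and the $p_i$, so the construction should be phrased to track the number of components; it will in fact embed the pretzel link in general, and one simply restricts to the knot case.
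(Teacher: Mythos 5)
Your proposal follows essentially the same route as the paper: you decompose the pretzel diagram into its twist regions, prove a key lemma that a single twist region (the paper's ``helix'') of arbitrary length and sign embeds in a deep enough iteration via the two-dimensional combinatorial representation of the tetrahedron, and then join the regions with closure arcs routed along the top and bottom of that representation --- which is precisely the paper's Lemma on independent helices followed by its joining argument. The two caveats you flag (that the connecting arcs introduce only crossings removable by Reidemeister~I moves, and that the construction really produces the pretzel link in general) are exactly the points the paper addresses, so your sketch matches the published proof in both structure and substance.
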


However, we were unable to prove that in general any knot is inside the tetrahedron, but we do not see any real impediment to find them. Thus we leave open the following

\begin{conjecture*}
    Every knot $K$ can be embedded into the Sierpinski Tetrahedron.
\end{conjecture*}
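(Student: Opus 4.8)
The plan is to push the method behind Theorem~A---and its adaptation to pretzel knots---as far as the geometry of the Sierpinski Tetrahedron permits, replacing the face of the Menger Sponge by the combinatorial diagram of the tetrahedron. Every knot $K$ has an arc presentation in grid form of some grid number $m$, with an associated connectivity graph; this is the combinatorial data we feed into the argument. The first task is to locate, inside the combinatorial diagram of a finite iteration of the tetrahedron, a planar scaffold that plays the role of the square grid used for the Menger Sponge: a collection of ``horizontal'' and ``vertical'' arcs lying in the $1$-skeleton, any two of which are either disjoint or cross transversally at one point. Because the four sub-tetrahedra produced by a subdivision meet only at the midpoints of the parent's edges, such a scaffold cannot sit on a single face; it has to be threaded through many sub-tetrahedra, and building it---together with an explicit bound on the iteration number $n(m)$ required---is the combinatorial core of the problem.

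Granting such an $m\times m$ scaffold, the next step is crossing resolution. In the Menger Sponge proof each crossing of the connectivity graph was pushed slightly into the sponge, with the perturbation chosen to avoid the removed cubes. The analogue is to divert the over-strand of a crossing into a neighbouring corner sub-tetrahedron (or one level deeper), route it along the edges there, and return it to the scaffold while the under-strand stays fixed; one must verify that this detour can always be taken within the subset of the tetrahedron that survives \emph{every} future iteration, i.e.\ that it never enters an octahedral region removed at some later stage. The pretzel case already shows how a vertical column of crossings (a twist region) can be absorbed into the fractal, so the genuinely new content is handling crossings in an arbitrary configuration rather than a cyclically symmetric one, and checking that different detours neither collide with each other nor fall into a hole.

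Accordingly I would organize the argument as: (i) a lemma that for each $m$ produces an $m\times m$ grid-like scaffold in iteration $n(m)$ of the Sierpinski Tetrahedron, with $n(m)$ bounded explicitly; (ii) a lemma that a single crossing on the scaffold can be resolved by a detour into an adjacent sub-tetrahedron; (iii) a lemma that all such detours can be performed simultaneously, perhaps after subdividing a further bounded number of times so that each crossing is assigned its own sub-tetrahedron, guaranteeing the detours are pairwise disjoint and miss the holes; (iv) drawing the connectivity graph of $K$ on the scaffold and applying (ii)--(iii). As with Theorem~A, this would also re-establish that tame knots form a countable set, and keeping track of $n(m)$ in (i) and (iii) would feed directly into the complexity comparison between the two fractals.

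The step I expect to be hardest is (i). The Sierpinski Tetrahedron is far more porous than the Menger Sponge: neighbourhoods of generic points are deleted, and the limit set is a tree-like union of scaled copies glued at single points, so it is not at all clear that a genuine grid with the required transversal intersection pattern exists in the $1$-skeleton. It may be necessary to weaken ``grid'' to a looser scaffold---say, any planar $4$-valent graph that realizes the Gauss code of $K$---that is still rich enough to host every arc presentation but flexible enough to be embedded; identifying the right scaffold and proving it embeds is where the real difficulty lies. One should also take care in (iii) that the detours remain in the limiting fractal rather than merely in a finite iteration, since no point of the embedded knot may be removed at a later stage.
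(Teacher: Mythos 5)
This statement is left \emph{open} in the paper: the authors explicitly state it as a conjecture, prove it only for pretzel knots, and remark that the Menger-style strategy of drawing a connectivity graph on a face and then resolving crossings ``doesn't work as there seem to be no appropriate knot diagram suitable for this'' in the tetrahedron. Your proposal is a program rather than a proof, and the gap is exactly the one you flag yourself: step (i), the existence of an $m\times m$ grid-like scaffold with transversal crossings in the $1$-skeleton of some finite iteration $t_{n(m)}$, is never established. This is not a routine verification you can defer --- it is the whole difficulty. The four sub-tetrahedra of a subdivision meet only at midpoints of edges, so the $1$-skeleton of $t_n$ is a union of edge-graphs of small tetrahedra glued at single vertices; there is no flat face carrying a product structure of ``horizontal'' and ``vertical'' lines, and no analogue of the Cantor-dust lemma (Lemma~\ref{lemma: pushing into menger sponge}) guaranteeing straight chords through the fractal along which over-strands can be diverted. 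Steps (ii)--(iii) inherit the same problem: a ``detour into a neighbouring sub-tetrahedron'' must enter and leave that sub-tetrahedron through the same corner point, so it is far from clear that it can separate an over-strand from an under-strand at all.

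It is also worth noting that the paper's partial result (Theorem~\ref{thm: pretzel in pyramid}) does not proceed by resolving crossings of a planar connectivity graph. It works in the combinatorial representation of the tetrahedron, exhibits each helix (twist region) directly as an explicit path with its crossings already consistently resolved by the three-dimensional structure, and then joins the helices cyclically through the top and bottom of the representation. That construction exploits the very special, cyclically symmetric form of a pretzel diagram and gives no mechanism for realizing an arbitrary Gauss code. So your plan identifies the right obstacles, but as written it proves nothing beyond what the paper already establishes, and the conjecture remains open.
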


The Sierpinski Tetrahedron is \textit{simpler} than the Menger Sponge. We use these two fractals to study the second question. To do so we define $M(K)$ as the minimal number of iterations of the Menger Sponge needed for $K$ to be embedded in it. For example, we have $M(3_1) = 1$. Similarly, whenever a knot $K$ is embedded in a finite iteration of the Sierpinski Tetrahedron, we denote the minimal such iteration by $S(K)$.
With this notation at hand, we prove
\begin{lthm}
Let $K$ be a knot, then we have
\begin{equation*}
    M(K) \le S(K),
\end{equation*}
supposing $S(K)$ is defined.
\end{lthm}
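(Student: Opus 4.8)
The plan is to show that any embedding of a knot $K$ into a finite iteration of the Sierpinski Tetrahedron can be converted into an embedding into a finite iteration of the Menger Sponge with no more iterations used. The natural strategy is to exhibit, for each $n$, a suitable copy of the $n$-th iteration of the Sierpinski Tetrahedron sitting inside the $n$-th iteration of the Menger Sponge, in a way that is compatible with the iterative subdivision of both fractals. Concretely, I would first treat the base case $n = 1$: place a regular tetrahedron inside the unit cube using four of the eight vertices of the cube (the standard inscription, where each edge of the tetrahedron is a face-diagonal of the cube), and check that none of the four face-triangles nor the $1$-skeleton of this tetrahedron passes through any of the seven cubes removed at the first iteration of the Menger Sponge. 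The removed cubes are the six face-centers and the body-center of the $3\times 3 \times 3$ subdivision; since the inscribed tetrahedron's faces are spanned by face-diagonals of the cube, one verifies by direct coordinate computation that they avoid the central $1/3$-cube on each face and the middle cube.

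The key step is then to make this inclusion \emph{self-similar}. Each of the four small tetrahedra produced at the next iteration of the Sierpinski construction is a scaled-by-$1/2$ copy sitting at a corner of the original tetrahedron; I would check that each such corner region is contained in one of the twenty surviving sub-cubes of the Menger Sponge's first iteration (the eight corner cubes, in fact), so that the inclusion ``$n$-th Sierpinski $\subseteq$ $n$-th Menger'' propagates inductively: having placed the first iteration correctly, each surviving sub-tetrahedron lies in a surviving sub-cube, and we apply the base case rescaled. This gives, for every $n$, an embedding $T_n \hookrightarrow M_n$ where $T_n, M_n$ denote the $n$-th iterations. Composing with the given embedding of $K$ into $T_{S(K)}$ yields an embedding of $K$ into $M_{S(K)}$, whence $M(K) \le S(K)$.

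The main obstacle I anticipate is the base-case verification that the \emph{entire} inscribed tetrahedron — not just its vertices or edges, but the solid or at least its two-dimensional faces carrying the knot — avoids all seven removed cubes, together with checking that the Sierpinski corner sub-tetrahedra land inside surviving Menger sub-cubes rather than straddling a removed one. This is a finite but slightly delicate coordinate check: the face-diagonal planes of the tetrahedron do pass near the removed face-center cubes, so one must confirm the inequalities are strict. A secondary subtlety is that the knot $K$ embedded in the Sierpinski Tetrahedron may live on its surface (the union of the four faces at some iteration) rather than along a $1$-skeleton of a sub-division, so the inclusion must carry that surface into the surviving part of the Menger Sponge; restricting attention to the solid tetrahedron inscribed in the solid cube, and noting the Menger Sponge at stage $n$ contains the boundary data we need, handles this. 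If the cleanest inscription turns out to place some sub-tetrahedron on the boundary between a corner cube and an edge cube of the Menger subdivision, one can perturb by an affine self-map of the cube preserving the Menger structure, or pass to one extra subdivision — but I expect the standard inscription to work on the nose, so that the inequality is in fact tight in spirit and no loss of iterations occurs.
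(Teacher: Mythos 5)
Your overall strategy --- exhibiting, for each $n$, a copy of the $n$-th Sierpinski iteration inside the $n$-th Menger iteration and then transporting the knot --- is exactly the paper's strategy. But the specific realization you propose, the rigid affine inscription of the regular tetrahedron on four alternate vertices of the cube, does not survive the ``finite but slightly delicate coordinate check'' you defer. Take the cube $[0,1]^3$ and the inscribed tetrahedron with vertices $(0,0,0)$, $(1,1,0)$, $(1,0,1)$, $(0,1,1)$. The edge joining $(0,0,0)$ to $(1,1,0)$ is the face diagonal $\{(t,t,0): 0 \le t \le 1\}$, whose midpoint $(1/2,1/2,0)$ belongs only to the central face-cube $[1/3,2/3]\times[1/3,2/3]\times[0,1/3]$ of the $3\times3\times3$ subdivision, hence is removed already in $M_1$. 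So every edge of the inscribed tetrahedron passes \emph{through}, not merely near, removed material at the very first iteration. The inductive step fails for the same reason and for a metric one besides: the corner sub-tetrahedron at the origin spans $[0,1/2]^3$, which is strictly larger than any surviving $1/3$-cube, so it cannot ``land inside a surviving Menger sub-cube,'' and its edge $\{(t,t,0): 0\le t\le 1/2\}$ again meets the removed bottom face-cube (e.g.\ at $(0.4,0.4,0)$). No perturbation by an affine self-map of the cube repairs this, since the obstruction is one of size, not of boundary cases.

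The paper avoids all of this by working purely combinatorially with one-skeletons: the knots it finds in $S_n$ live on the edges of the tetrahedra of the $n$-th iteration, so it suffices to embed the one-skeleton of $S_n$ as a (subdivided) subgraph of the one-skeleton of $M_n$, routing each tetrahedron edge along a path of grid edges of the sponge rather than along a straight face diagonal. The base case places the four tetrahedron vertices at four alternate cube vertices and joins them by edge paths on the cube; the induction replaces each elementary tetrahedron subgraph by the corresponding $S_1$-configuration with matching corner labels, using that one extra Sierpinski subdivision costs exactly one extra Menger subdivision, and then checks that these substitutions occur in disjoint regions so that no extra knotting is introduced. To salvage your version you would have to replace the straight edges by such grid paths and rerun the induction at the level of graphs --- at which point you have essentially reconstructed the paper's argument.
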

 We see the above result as a positive answer to the second question posed above: indeed, when using knots as probes to measure the complexity of a fractal, the simpler fractal take longer to include more complicated knots.

\subsection{Previous Literature}

Knots and graph theory have been related for a long time. Frequently, constructions of graph theory are used to compute and create knot invariants. We refer the reader to \cite{KnotsnGraphs}, and the bibliography therein, for examples and the history of this connection.

A particular set of graphs which will be related to our combinatorial representation appears in \cite{SierpinskiGraphs}, and is denoted $S_4^n$ in there. The main difference is that for us the vertices are resolved, that is, we know which edge goes above and which one goes below. This difference is fundamental since we use the knot diagrams found in these combinatorial representations to construct the actual knots we are looking for. To be more precise, if we were to find the connectivity graphs of knots in these $S_4^n$, we would still need to know that the vertices can be resolved consistently with the three dimensionality of the tetrahedron.

For instance, in our example \ref{ex: example 3_1 sierpinski} below, we find the trefoil in an iteration of the Sierpinski Tetrahedron. If we change all crossings to a vertex we obtain the graph $S_4^4$ (see \cite[Figure 7, page580]{SierpinskiGraphs}). However, we need to know which line goes above and below to obtain a real knot diagram of the trefoil.

Finally, when one searches for knots and fractals, not much appears. Again, some connections exists aimed at the computation of knot invariants, as for example in \cite{KnotPartitionFractals}. Other directions, frequently aimed at artistic renditions and the search for beautiful patterns in them, deal with knots obtained by iteratively substituting crossings by smaller and smaller versions of the original knot. This leads to certain wild knots that resemble fractals.

In this paper, however, our point of view has been to use knots and their complexity to study how different fractals, created by iterative processes, manifest their complexity. Our guiding principle has been the idea that more complicated fractals should produce complicated knots \textit{faster}. It seems this perspective has not been explored before.

\subsection{Acknowledgements}

The authors thank the Outreach Department of the Department of Mathematics of the University of Toronto for the organization of the mentorship programs and their support during this time. We also wish to thank the Department of Mathematics of the university of Toronto, as well as the organizing committee of the Canadian Undergraduate Mathematics Conference 2022, for their support in the presentation of this work in the CUMC 2022. 

The three dimensional pictures that appear in this work have been created with Blender. The Arc Presentations in grid form of $3_1$ and $4_1$ were taken from The Knot Atlas.
\section{Review of prerequisites}\label{sec: review of prereq}

\subsection{The Cantor Set, Cantor Dust, Sierpinski Carpet, Sierpinski Tetrahedron and the Menger Sponge.}

Let $I = [0, 1]$. By the Cantor set we mean the classical \textit{one third Cantor set}. We denote it by $C$ and recall the following classical characterization:

\begin{proposition}\label{prop: cantor set characterization}
A number $x\in [0, 1]$ is a \textit{Cantor number}, that is $x\in C$, if and only if in its ternary non-ending representation there are no digit $1$'s appearing.
\end{proposition}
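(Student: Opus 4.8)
The plan is to argue directly from the iterative construction. Write $C=\bigcap_{n\ge 0}C_n$, where $C_0=[0,1]$ and $C_{n+1}$ is obtained from $C_n$ by deleting the open middle third of each of its constituent closed intervals. The heart of the matter is the following claim, which I would prove by induction on $n$: a point $x\in[0,1]$ lies in $C_n$ if and only if it admits a ternary expansion $x=\sum_{k\ge 1}a_k3^{-k}$ with $a_1,\dots,a_n\in\{0,2\}$. Once this is established, the proposition follows by intersecting over $n$, as explained below.

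For the inductive step I would use the fact that the $2^n$ closed intervals comprising $C_n$ are precisely the intervals $\bigl[\sum_{k=1}^{n}a_k3^{-k},\ \sum_{k=1}^{n}a_k3^{-k}+3^{-n}\bigr]$ as $(a_1,\dots,a_n)$ ranges over $\{0,2\}^n$ (this itself is part of the induction hypothesis). Splitting such an interval into three equal pieces and discarding the middle one corresponds exactly to prescribing the next ternary digit: the left third forces $a_{n+1}=0$, the right third forces $a_{n+1}=2$, while the deleted open middle third consists precisely of those points whose \emph{every} ternary expansion has $(n{+}1)$-st digit equal to $1$. Hence $x\in C_{n+1}$ iff $x$ has an expansion with $a_1,\dots,a_{n+1}\in\{0,2\}$, completing the step.

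The one genuine subtlety, which I would address explicitly, is the non-uniqueness of ternary expansions at triadic rationals: a point has at most two ternary expansions, one ending in $\overline{0}$ and one in $\overline{2}$. The retained endpoints of the deleted middle thirds are exactly the places where choosing the right representation matters — for instance $1/3$ must be kept, and although $1/3=0.1\overline{0}$ one also has $1/3=0.0\overline{2}$, whose digits avoid $1$. So throughout, the phrase ``admits a ternary expansion with $\dots$'' must be read existentially (there is such an expansion), which is exactly the reading intended by ``non-ending representation'' in the statement; conflating it with ``every expansion'' would make the claim false at these endpoints.

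Finally, to pass from the claim to the proposition: by the claim, $x\in C$ iff for every $n$ there exists a ternary expansion of $x$ whose first $n$ digits all lie in $\{0,2\}$. Since $x$ has at most two expansions, one of them works for infinitely many $n$; and if the first $n$ digits of a fixed expansion avoid $1$, then so do its first $m$ digits for every $m\le n$, so this single expansion has \emph{all} its digits in $\{0,2\}$. The converse is immediate, since an expansion with no $1$'s certifies $x\in C_n$ for all $n$. I expect the only delicate point to be the bookkeeping around these double representations — ensuring at each stage that the kept endpoints are witnessed by the expansion on the correct side — while the rest is a routine induction on the middle-thirds construction.
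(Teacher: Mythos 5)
Your argument is correct, but it is genuinely different from what the paper does: the paper offers no proof at all for this proposition, simply citing the standard reference (Peitgen--J\"urgens--Saupe, p.~72), whereas you give a self-contained induction on the middle-thirds construction. Your induction is the standard one and is sound: identifying the $2^n$ constituent intervals of $C_n$ with prefixes in $\{0,2\}^n$, observing that the deleted open middle third is exactly where every expansion is forced to have next digit $1$, and then passing to the intersection using the fact that a point has at most two ternary expansions (so one of them works for all $n$ by monotonicity). What your version buys is that it makes explicit the one point where the paper's phrasing is fragile: the characterization is only true read \emph{existentially} (``$x$ admits some ternary expansion with no digit $1$''), and you rightly insist on that reading. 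I would only push back on your remark that this is ``exactly the reading intended by non-ending representation'': if ``non-ending representation'' means the canonical non-terminating expansion, the statement as written actually fails at points such as $2/3\in C$, whose non-terminating expansion is $0.1\overline{2}$ (containing a $1$) while its terminating expansion $0.2\overline{0}$ is the one avoiding $1$'s. So your proof establishes the correct existential statement, but the witnessing expansion is sometimes the terminating one, not the non-ending one; this is worth stating as a correction to the proposition's wording rather than as an interpretation of it.
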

\begin{proof}
    See \cite[page 72]{chaosandfractals}.
\end{proof}

The product of two Cantor Sets, $C\times C$, is called \textit{Cantor Dust}. It can also be constructed in an iterative process as follows:
\begin{itemize}
    \item Define $C_0 = I\times I$.
    \item Divide $C_0$ into nine equal squares of length size $1/3$. To obtain $C_1$, remove any point that is contained only in the central square or the middle central squares of the sides. Notice that $C_1$ is closed.
    \item Iterate this process for each remaining square of $C_n$ to obtain $C_{n + 1}$.
\end{itemize}
We then have that
\begin{equation*}
    C\times C = \displaystyle\bigcap_{n = 0}^{\infty} C_n.
\end{equation*}
We also can characterize Cantor Dust in terms of ternary representations. More precisely we have

\begin{proposition}\label{prop: cantor dust characterization}
A point $(x, y)\in I\times I$ is a Cantor Dust Point, that is $(x, y)\in C\times C$ if and only if both $x$ and $y$ do not have a digit one in their corresponding ternary non-ending representations.
\end{proposition}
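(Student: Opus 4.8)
The plan is to reduce Proposition~\ref{prop: cantor dust characterization} to the one-dimensional characterization already recorded in Proposition~\ref{prop: cantor set characterization}. Since $C \times C$ is by definition the Cartesian product of $C$ with itself, a point $(x,y)$ lies in $C \times C$ if and only if $x \in C$ \emph{and} $y \in C$. Applying Proposition~\ref{prop: cantor set characterization} to each coordinate separately, this is equivalent to saying that neither the ternary non-ending expansion of $x$ nor that of $y$ contains a digit $1$. That is essentially the whole argument; the only thing requiring care is the bookkeeping around ternary representations.

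Concretely, I would first recall that ``non-ending representation'' means we forbid expansions that terminate in an infinite string of $0$'s, so that every real in $[0,1]$ (except $0$ itself, or with the usual convention $0 = 0.000\ldots$ being the only choice) has a \emph{unique} such expansion; this is what makes the digit condition well defined. Then I would state the membership equivalence $(x,y) \in C \times C \iff x \in C \text{ and } y \in C$, which is immediate from the definition of the product set. Combining with Proposition~\ref{prop: cantor set characterization} gives the claim directly.

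If one instead wants to connect this to the iterative construction $C \times C = \bigcap_{n=0}^\infty C_n$ given just above the statement, I would argue that $C_n = C^{(n)} \times C^{(n)}$, where $C^{(n)}$ is the $n$-th stage of the one-third Cantor set construction: dividing $I \times I$ into nine squares and keeping those not in the central cross is exactly keeping products of the form $J \times J'$ where $J, J'$ are among the two retained thirds of $I$, and iterating preserves this product structure. Taking intersections over $n$ then recovers $C \times C$, consistent with the definition, and the ternary description of $C_n$ (first $n$ ternary digits avoid $1$) passes to the limit.

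I do not anticipate a genuine obstacle here: the statement is essentially a restatement of Proposition~\ref{prop: cantor set characterization} coordinatewise, and the proof is a two-line observation. The only place to be slightly careful is making sure the ``non-ending representation'' convention is invoked so that the digit condition is unambiguous (e.g.\ $1/3 = 0.0222\ldots$ rather than $0.1000\ldots$), which is precisely the subtlety already handled in the proof of Proposition~\ref{prop: cantor set characterization}.
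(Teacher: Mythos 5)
Your proposal is correct and follows the same route as the paper, which simply observes that the statement follows immediately from Proposition~\ref{prop: cantor set characterization} applied coordinatewise, since $(x,y)\in C\times C$ if and only if $x\in C$ and $y\in C$. The extra remarks on the non-ending representation convention and on the product structure of the iterative stages $C_n$ are sound but not needed beyond what the paper records.
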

\begin{proof}
    This follows immediately from proposition \ref{prop: cantor set characterization}.
\end{proof}

We now discuss is the Sierpinski Carpet (See \cite[section 2.2]{chaosandfractals}). It is also defined by an iterative process as follows:
\begin{itemize}
    \item Define $s_0 = I\times I$.
    
    \item Divide $s_0$ into $9$ equal squares of side length 1/3. Remove the center open one.
    
    \item Repeat the previous step, removing the central square, to each one of the remaining squares of $s_n$ to obtain $s_{n+1}$.
\end{itemize}
The \textit{Sierpinski Carpet} is defined as
\begin{equation*}
    s = \displaystyle\bigcap_{n = 0}^{\infty}s_n.
\end{equation*}

Just as the Cantor set and the Cantor Dust, we have a characterization of $s$ in terms of ternary representations. Concretely, we have
\begin{proposition}\label{prop: sierpinski carpet characterization}
A point $(x, y)\in I\times I$ is a Sierpinski Carpet point, that is $(x, y)\in s$ if and only if $x$ and $y$ do not have a digit one in the same position in their nonending ternary representation.
\end{proposition}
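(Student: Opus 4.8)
The plan is to mimic the proofs of Propositions \ref{prop: cantor set characterization}--\ref{prop: cantor dust characterization}: translate the iterative geometric construction into a statement about ternary digits. The key observation is that at stage $n$ of the carpet construction we are slicing the unit square into a $3^n \times 3^n$ grid of small squares, and the label of the small square containing a point $(x,y)$ is recorded precisely by the first $n$ ternary digits of $x$ and of $y$. So I would first make this dictionary precise: if $x = 0.a_1 a_2 a_3 \ldots$ and $y = 0.b_1 b_2 b_3 \ldots$ in nonending ternary, then for each $n$ the point $(x,y)$ lies in the grid square indexed by the pair of words $(a_1\cdots a_n,\, b_1\cdots b_n)$, and the center square removed at step $k$ from a given subsquare is exactly the one whose next digit pair is $(1,1)$.

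Next I would argue each direction. For the ``only if'' direction: suppose $(x,y) \in s$, so $(x,y) \in s_n$ for all $n$. If $x$ and $y$ had a digit $1$ in the same position, say position $k$, i.e. $a_k = b_k = 1$, then at the $k$-th stage of the iteration the subsquare containing $(x,y)$ would be the central square that gets removed, so $(x,y) \notin s_k$, a contradiction. For the ``if'' direction: suppose $x$ and $y$ never have a $1$ in the same ternary position. Then at every stage $n$ the subsquare containing $(x,y)$ is never the central one being removed (the central one requires both digits to be $1$ simultaneously), hence $(x,y) \in s_n$ for all $n$, so $(x,y) \in s = \bigcap_n s_n$.

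The one genuine subtlety --- and the step I expect to need the most care --- is the non-uniqueness of ternary expansions, exactly as in the Cantor set case: a point on a grid line has two ternary representations (one ending in $000\ldots$, one ending in $222\ldots$), and a point can lie on the boundary of a removed central square without actually being removed, since only the \emph{open} center is deleted. This is why the statement is phrased in terms of the nonending representation: I would note that each of $x,y$ has a canonical nonending ternary expansion, use that one throughout, and check that with this convention a boundary point of a removed square has, in at least one coordinate, a digit $\ne 1$ at the relevant position, so it is correctly classified as belonging to $s$. Handling this boundary bookkeeping carefully, rather than the main digit argument, is where the real work lies; once it is pinned down the proposition follows by induction on $n$ as above. (In fact, just as the authors did for Cantor Dust, once the grid-labelling dictionary is set up one can simply say this ``follows immediately'' from the one-dimensional picture together with the observation that removal happens iff both coordinates simultaneously sit in the middle third.)
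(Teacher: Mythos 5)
The paper itself offers no proof of this proposition---it only cites Allouche--Shallit---so your direct digit-dictionary induction is the natural thing to attempt, and the ``if'' direction of your argument is sound: if the chosen expansions never have a simultaneous digit $1$, the nested \emph{closed} squares determined by the digit prefixes are all retained, so the point lies in every $s_n$. The genuine gap is in the ``only if'' direction, and it sits exactly at the boundary bookkeeping you correctly identified as the crux. Your proposed patch---that with the nonending convention a surviving boundary point always has a digit $\neq 1$ in at least one coordinate at the relevant position---is false. Take $(x,y)=(2/3,2/3)$. The nonending ternary expansion of $2/3$ is $(0.1222\ldots)_3$, so \emph{both} coordinates have the digit $1$ in position one; yet $(2/3,2/3)$ is a corner of the retained closed square $[2/3,1]\times[2/3,1]$ (in local coordinates it is the corner $(0,0)$, which is never removed at any later stage), hence $(2/3,2/3)\in s$. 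The same happens for $(2/3,1/2)$, which lies on the closed left edge of the retained square $[2/3,1]\times[1/3,2/3]$. So the inference ``$a_k=b_k=1$ implies the point is removed at stage $k$'' breaks down: the digits only tell you the point lies in the \emph{closed} central subsquare, while the construction deletes the \emph{open} one.

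This is not something you can repair while keeping the criterion pinned to the nonending expansion; the proposition as literally worded fails at such points (the same issue already afflicts Proposition \ref{prop: cantor set characterization} for $x=2/3$, whose nonending expansion $(0.1222\ldots)_3$ contains a $1$ even though $2/3\in C$). The correct formulation, which is what the cited reference proves, is existential: $(x,y)\in s$ if and only if there \emph{exist} ternary expansions of $x$ and $y$ with no position at which both digits equal $1$. With that statement your induction does go through: for ``if,'' any witnessing pair of expansions produces a nested sequence of retained closed squares containing the point; for ``only if,'' one argues that a point surviving every stage lies in some nested sequence of retained closed squares and reads the expansions off from that sequence, rather than starting from a fixed canonical expansion of each coordinate.
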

\begin{proof}
    See \cite[page 405, Example 14.1.1]{Allouche_Shallit_2003}.
\end{proof}

An immediate conclusion from propositions \ref{prop: cantor dust characterization} and \ref{prop: sierpinski carpet characterization} is the following

\begin{corollary}\label{cor: cantor dust subset sierpinski carpet}
The Cantor Dust is a subset of the Sierpinski Carpet.
\end{corollary}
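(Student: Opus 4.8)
The plan is to deduce the corollary directly from the two ternary characterizations just established, namely Proposition \ref{prop: cantor dust characterization} and Proposition \ref{prop: sierpinski carpet characterization}. Both sets are described by a condition on the non-ending ternary expansions of the coordinates $x$ and $y$ of a point $(x,y)\in I\times I$, so the containment $C\times C\subseteq s$ becomes a purely logical implication between these two digit conditions.

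\begin{proof}
Let $(x,y)\in C\times C$. By Proposition \ref{prop: cantor dust characterization}, neither $x$ nor $y$ has a digit $1$ anywhere in its non-ending ternary representation. In particular, there is no position $n$ at which \emph{both} the $n$-th ternary digit of $x$ and the $n$-th ternary digit of $y$ equal $1$, since no such digit equals $1$ at all. By Proposition \ref{prop: sierpinski carpet characterization}, this is exactly the condition for $(x,y)$ to lie in the Sierpinski Carpet $s$. Hence $(x,y)\in s$, and since $(x,y)$ was arbitrary, $C\times C\subseteq s$.
\end{proof}

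The only point requiring any care is the quantifier structure: the Cantor Dust condition is ``for all $n$, the $n$-th digit of $x$ is not $1$, and for all $n$, the $n$-th digit of $y$ is not $1$,'' while the Sierpinski Carpet condition is the weaker ``for all $n$, not both the $n$-th digit of $x$ and the $n$-th digit of $y$ are $1$.'' Since the former clearly implies the latter, there is no real obstacle here; the corollary is essentially immediate once the characterizations are in hand, which is presumably why the authors phrase it as an ``immediate conclusion.''
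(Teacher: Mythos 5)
Your proof is correct and is essentially the same as the paper's: both deduce the inclusion from Propositions \ref{prop: cantor dust characterization} and \ref{prop: sierpinski carpet characterization} by noting that if neither coordinate has a ternary digit $1$ anywhere, then in particular the two coordinates cannot share a digit $1$ in the same position. Your version simply spells out the quantifier structure that the paper's one-line proof leaves implicit.
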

\begin{proof}
The points of the Cantor Dust have no ones in the ternary representations of their coordinates, so they cannot share a digit one in the same position.
\end{proof}

Now we discuss the Sierpinski Tetrahedron. It is also defined by an iterative process as follows:

\begin{itemize}
    \item Define $t_0$ as a closed regular tetrahedron.
    
    \item On each corner, keep the homothetic closed tetrahedron with length size scaled by $1/2$. Remove the rest to obtain $t_1$.
    
    \item Repeat the previous step on each remaining tetrahedron of the previous step to obtain $t_{n+1}$ from $t_n$.
\end{itemize}
The \textit{Sierpinski Tetrahedron} is defined as
\begin{equation*}
    T = \displaystyle\bigcap_{n = 0}^{\infty}t_n.
\end{equation*}

The final fractal we will need is produced iteratively as follows:
\begin{itemize}
    \item Define $M_0 = I\times I\times I$.
    \item Divide $M_0$ into 27 equal cubes of side-length size $1/3$. To obtain $M_1$, remove any point that is only contained in the central cube of each face or the open central cube of $M_0$. Notice that $M_1$ is a closed set.
    \item Iterate this process for each remaining closed cube of $M_n$ to obtain $M_{n + 1}$.
\end{itemize}
Then the \textit{Menger Sponge} is defined as
\begin{equation*}
    M = \displaystyle\bigcap_{n = 0}^{\infty}M_n.
\end{equation*}
\subsection{The Arc Presentation}

As we have mentioned before, every knot has a knot diagram that fits our purposes perfectly. We will review it now. 

An \textit{Arc Presentation in grid form} is encoded in an ordered list of $n$ unordered pairs 
\begin{equation*}
    \{a_1, b_1\},..., \{a_n, b_n\},
\end{equation*}
sucht that $a_1,..., a_n$ and $b_1,..., b_n$ are permutations of $1, ..., n$ and \begin{equation*}
    a_i \neq b_i, i= 1,...., n.
\end{equation*}

We then construct the finite grid of points 
\begin{equation*}
   (a, b), 1\le a, b\le n .
\end{equation*}
It consists of $n^2$ points with integer coordinates. To construct the presentation we do as follows:
\begin{description}
    \item[Step 1:] For each $1\le i\le n$, draw the horizontal line segment joining the points $(a_i, i)$ with $(b_i, i)$.
    \item[Step 2:]  Each vertical line at $j =1, 2, ..., n$ has exactly two points drawn on it. Draw the segment joining those two points for each $j$.
    \item[Step 3:] To resolve the crossings, whenever there is one, always draw the vertical segment \textit{above} the horizontal one.
\end{description}

For a given knot $K$, the minimal $n$ for which the above construction can be carried out is called the \textit{Arc index} of $K$. It is denoted by $\alpha(K)$.

\begin{example}\label{ex: 4_1 arch presentation}
An Arc Presentation of the Eight Knot $4_1$ is
\begin{equation*}
    \{3, 5\}, \{6, 4\}, \{5, 2\}, \{1, 3\}, \{2, 6\}, \{4, 1\}.
\end{equation*}
The obtained knot diagram is 
\begin{center}
    \includegraphics[scale = 0.7]{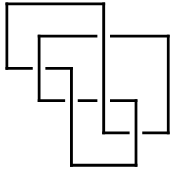}
\end{center}
It can be proven that $\alpha(4_1) = 6.$
\end{example}

We have the following result:

\begin{proposition}\label{prop: arc presentation exists}
Every knot admits an Arc Presentation in grid form. Furthermore, there is an algorithm to construct the Arc Presentation out of any other knot diagram of the knot.
\end{proposition}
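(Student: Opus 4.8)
The statement to prove is Proposition~\ref{prop: arc presentation exists}: every knot admits an Arc Presentation in grid form, and there is an algorithm converting any knot diagram into one.

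\textbf{Plan of proof.} The plan is to start from an arbitrary diagram $D$ of the knot $K$ and massage it, through a finite sequence of planar isotopies and local moves, into a \emph{rectangular} (or \emph{grid}) diagram: one built entirely from horizontal and vertical segments meeting the axis-parallel grid, with vertical segments always passing over horizontal ones at crossings. Once we have such a rectangular diagram with exactly one horizontal and one vertical segment on each grid line, reading off the $x$-coordinates of the endpoints of the $i$-th horizontal segment gives the unordered pair $\{a_i,b_i\}$, and the permutation conditions $a_i\neq b_i$ and $\{a_i\}=\{b_i\}=\{1,\dots,n\}$ are exactly the statement that the segments form a single closed curve with one horizontal and one vertical edge per line. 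So the real content is the \emph{existence of a rectangular diagram}, obtained algorithmically from $D$.

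\textbf{Key steps, in order.} First I would put $D$ in general position and replace it, up to isotopy, by a piecewise-linear diagram; then I would use the standard fact that any PL arc in the plane can be approximated by a staircase of axis-parallel segments, so that $D$ becomes a diagram whose strands run only horizontally and vertically. Second, I would perturb so that no two horizontal segments are collinear and no two vertical segments are collinear, and so that all crossings occur transversally between a horizontal and a vertical strand; at each crossing, if the vertical strand currently goes under, perform a local modification (a small ``jog'' near the crossing, or equivalently a $90^\circ$ rotation of a small disk) to make the vertical strand pass over — this does not change the knot type since we are free to choose which strand is over within a small ball, provided we track the effect on the rest of the diagram; the cleanest way is to first make \emph{all} crossings vertical-over by an isotopy that is possible precisely because one may rotate the whole diagram / reroute, and this is where care is needed. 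Third, I would \emph{consolidate}: merge consecutive collinear segments, eliminate any horizontal line carrying more than two vertical endpoints by sliding segments along empty grid lines (there are always enough free integer lines after a global rescaling of the grid), until each line contains exactly two marked endpoints; finally re-index the lines $1,\dots,n$ to obtain the pairs $\{a_i,b_i\}$. Each of these steps is a finite, explicitly describable procedure, which yields the ``furthermore'' clause about an algorithm.

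\textbf{Main obstacle.} The delicate point is the crossing-sign normalization in the second step: turning an arbitrary rectangular diagram into one where \emph{every} crossing has the vertical strand on top, without altering the knot type and without introducing new crossings that again violate the convention. The honest way to handle this is to note that passing a vertical-under crossing to vertical-over can always be realized by an isotopy that pushes one strand around — concretely, by extending the under-strand out past the end of the over-strand and back, trading the single bad crossing for a bounded number of good ones — and to check this process terminates (e.g. by an induction on the number of vertical-under crossings, verifying the rerouting can be done on a free grid line without creating new vertical-under crossings). I would also remark that this proposition is classical — it is essentially Cromwell's theorem that every link has an arc presentation, together with the Brunn/Birman--Menasco style rectangular-diagram calculus — so rather than reproving it in full we can cite the standard references and indicate the algorithm as above.

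\begin{remark}
A fully detailed proof, with the rectangular-diagram calculus and the termination argument for the crossing normalization, can be found in Cromwell's work on arc presentations; for our purposes only the existence and algorithmic nature of the presentation are needed, and these we take as known.
\end{remark}
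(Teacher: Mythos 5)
Your proposal is correct but does substantially more than the paper, whose entire proof of this proposition is a pair of citations: Cromwell's paper for the existence of arc presentations, and a second reference for an algorithm producing grid diagrams from arbitrary knot diagrams. You instead sketch the actual content of that classical argument --- approximate the diagram by an axis-parallel piecewise-linear one, normalize every crossing to vertical-over, consolidate so that each horizontal and each vertical grid line carries exactly one segment, and read off the pairs $\{a_i,b_i\}$ --- and you correctly isolate the one genuinely delicate step, namely that the rerouting which trades a vertical-under crossing for vertical-over crossings must be shown to terminate. Since you, too, ultimately defer to Cromwell for the details of that step, your proof and the paper's rest on the same external foundation; what yours adds is an honest account of what that foundation contains, which is useful exposition even if not logically required here. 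Two small points of care: the paper does not define a \texttt{remark} environment, so your closing remark would need to be recast as plain text or a newly declared theorem-style environment; and the permutation conditions on the $a_i$ and $b_i$ only guarantee that each vertical line carries exactly two endpoints, hence that the segments close up into a disjoint union of circles --- that there is a \emph{single} closed component is inherited from starting with a knot (rather than link) diagram, not a consequence of the combinatorial conditions themselves.
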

\begin{proof}
For a proof that every knot has an Arc Presentation see \cite{CROMWELL199537}. For a discussion of an algorithm to produce grid diagrams, see \cite{AlgorithmKnots}.
\end{proof}

\subsection{Pretzel Knots}

Given a tuple of integers $(q_1,....q_n)$ a \textit{Pretzel knot} (or possibly a link) is the knot (or link) that corresponds to the diagram

\begin{center}
    \includegraphics[scale = 0.5]{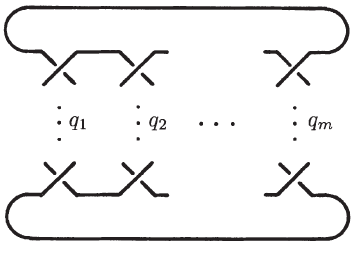}
\end{center}

The sign of $q_i$ corresponds to the orientation of the first crossing of the helix while the magnitude to the number of total crossings in the corresponding helix. That is, for $q_i$ positive the orientation goes as shown while for $q_i$ negative the crossings are flipped.

Notice that not every knot is a Pretzel knot. For example, the first prime knot that is not pretzel is $8_{12}$. We refer the reader to \cite[Theorem 17, Theorem 19]{DIAZ2023108583} for a list of the prime knots up to nine crossings that are or are not Pretzel. 

\section{The positive answer for the Menger Sponge}\label{sec: Positive answer menger}

In this section we will prove that every knot is embedded in a finite iteration of the Menger Sponge. Let us be more precise. For every knot $K$, we will prove that there exists an $n$ such that a closed path on the edges of the boundary of $M_n$ is equivalent to $K$. In this way, since points in the edges are not removed in successive iterations, the knot is also found in the Menger Sponge itself, as desired.

What we do now is as follows: given a knot $K$, we will see that on a face of a large enough iteration of the Menger Sponge we can draw the connectivity graph of an Arc Presentation of $K$. Then, by suitably resolving the intersections, we will be able to \textit{push} the presentation into the sponge in such a way that the vertical and horizontal segments lie in opposite faces and are joined by appropriate paths within the sponge (i.e. we avoid the holes).

The main fact we need for the \textit{pushing} into the Menger Sponge to be possible is given by the following

\begin{lemma}\label{lemma: pushing into menger sponge}
Let $(x, y)$ be a Cantor Dust point, then 
\begin{equation*}
    (x, y, z) \in M
\end{equation*}
for all $0\le z \le 1$.
\end{lemma}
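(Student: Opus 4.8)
The plan is to show directly that $(x,y,z)$ survives every stage of the iterative construction, i.e. that $(x,y,z) \in M_n$ for all $n \ge 0$, from which $(x,y,z) \in M = \bigcap_n M_n$ follows at once. The base case $M_0 = I^3$ is trivial, so the real content is an understanding of which of the $27$ subcubes are discarded at a subdivision step. Indexing a subcube by a triple $(i,j,k) \in \{0,1,2\}^3$, the ``central cube of a face'' is exactly a triple with precisely two coordinates equal to $1$, and the ``interior central cube'' is the triple $(1,1,1)$; hence the subcube indexed by $(i,j,k)$ is removed if and only if at least two of $i,j,k$ equal $1$, and the same rule applies verbatim inside each retained subcube at every later stage.

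Next I would fix coordinates. Since $(x,y)$ is a Cantor Dust point, Proposition \ref{prop: cantor dust characterization} gives non-ending ternary expansions $x = 0.a_1a_2a_3\cdots$ and $y = 0.b_1b_2b_3\cdots$ in which \emph{no} digit equals $1$, that is $a_m, b_m \in \{0,2\}$ for every $m$. Choose any ternary expansion $z = 0.c_1c_2c_3\cdots$ whatsoever. For each $n$ these digits single out a closed subcube $Q_n$ appearing at the $n$-th subdivision, namely the product of $[\sum_{m=1}^n a_m 3^{-m}, \sum_{m=1}^n a_m 3^{-m} + 3^{-n}]$ with the analogous intervals for $y$ and $z$; the standard estimate $0 \le \sum_{m > n} d_m 3^{-m} \le 3^{-n}$ for ternary digits shows $(x,y,z) \in Q_n$, and the $Q_n$ are nested. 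It then remains to check that each $Q_n$ is a \emph{retained} subcube: the index triple created at stage $m$ is $(a_m, b_m, c_m)$, and since $a_m, b_m \in \{0,2\}$ at most one of its entries (possibly $c_m$) can equal $1$, so by the rule above $Q_m$ is never discarded. Hence $(x,y,z) \in Q_n \subseteq M_n$ for all $n$, and the lemma follows.

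The only delicate point is the non-uniqueness of ternary expansions and the attendant ambiguity about boundary faces of subcubes — which is exactly why Proposition \ref{prop: cantor dust characterization} is phrased via the non-ending expansion. Here it causes no trouble: the sets $M_n$ are closed, a point on the face shared by a retained and a removed subcube is kept by definition, and the argument only ever asserts membership in the closed subcubes $Q_n$. So the one substantive input is the combinatorial observation that a digit $1$ in the $z$-coordinate is harmless as long as the $x$- and $y$-digits avoid $1$; this is precisely the statement that Cantor Dust in the base square leaves an entire unobstructed vertical shaft in the Sponge. I expect the write-up to be only a few lines once the index-triple removal rule is stated cleanly, and matching that rule against the ``remove the central cube of each face and the interior cube'' description is the single place where care is needed.
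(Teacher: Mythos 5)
Your proof is correct, and it takes a genuinely different route from the paper's. The paper argues geometrically: it defines $L_n$ as the set of base points $(x,y)$ whose entire vertical segment $\{(x,y,z): 0\le z\le 1\}$ lies in $M_n$, observes from the picture that $L_1$ consists of the four corner squares, and identifies the recursion producing $L_0, L_1, L_2,\dots$ with the iterative construction of the Cantor Dust, so that $\bigcap_n L_n = C\times C$. Your argument instead works pointwise and digit-by-digit: you extract the clean combinatorial removal rule (a subcube indexed by $(i,j,k)\in\{0,1,2\}^3$ is discarded iff at least two of its indices equal $1$), and then note that with $a_m,b_m\in\{0,2\}$ the triple $(a_m,b_m,c_m)$ can never have two $1$'s, so the nested closed cubes $Q_n$ containing $(x,y,z)$ are all retained. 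What each approach buys: the paper's version is shorter once you accept the picture and actually establishes the stronger ``if and only if'' statement (exactly the Cantor Dust points admit unobstructed vertical shafts), which is conceptually what motivates the lemma; your version proves only the direction the lemma asserts, but it is more self-contained and rigorous, replacing the informal claim that ``every point not shaded black has some hole behind it'' with an explicit verification, and your handling of the non-uniqueness of ternary expansions via closedness of the $M_n$ and membership in closed retained subcubes is exactly the right way to dispose of the boundary issue. Both proofs rest on the same underlying fact — a $1$ in the $z$-digit is harmless when the $x$- and $y$-digits avoid $1$ — so they are complementary presentations of one idea rather than conflicting arguments.
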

\begin{proof}
For each iteration of the Menger Sponge $M_n$ we define $L_n$ to be the subset of the points $(x, y)$ on the front face of $M_n$ such that $(x, y, z)\in M_n$ for $0\le z\le 1$.

$L_0$ is the whole front face of the cube $M_0$. In the next stage, $L_1$ consists only of the four corner squares as shown in the figure:
\begin{center}
    \includegraphics{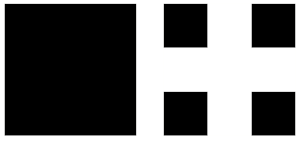}
\end{center}
Every point not shaded black in the front face has some hole of $M_1$ behind it. Notice however that this hole might not share a face with the front face.

In the next stage, this process gets repeated iteratively. It is enough to see what happens in each of the cubes which have a face on the front since those behind are just translations orthogonal to the plane of the front face. Thus, if a hole were to appear in a farther cube there is also one on a closer cube to the front face.

Thus $L_0, L_1, L_2,...$ is obtained by iteratively repeating the process shown in the above figure. We see this is the iterative process of the Cantor Dust of section \ref{sec: review of prereq}. We conclude that the points of the front face that are Cantor Dust points are the ones that can be joined by a straight line to the back face through $M$.
\end{proof}

With this lemma at hand we are ready to give the positive answer to question one from the introduction. Concretely, we have

\begin{theorem}\label{kinsponge}
All knots $K$ can be embedded into a finite iteration of the Menger Sponge.
\end{theorem}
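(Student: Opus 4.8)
The plan is to reduce the problem to drawing the connectivity graph of an Arc Presentation of $K$ on a single face of $M_n$ and then resolving every crossing by pushing one strand to the opposite face through the interior, exactly in the spirit announced before the lemma. First I would fix an Arc Presentation of $K$ in grid form, with grid size $\alpha(K)\times\alpha(K)$, obtained from Proposition \ref{prop: arc presentation exists}. The diagram consists of horizontal segments and vertical segments, with each vertical segment passing \emph{over} every horizontal segment it meets. My goal is to realize all the horizontal segments on (the $1$-skeleton of) the front face of a suitable $M_n$, all the vertical segments on the back face, and to connect the endpoints of each segment to its partner on the opposite face by a straight segment orthogonal to the two faces.

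The key computational step is choosing $n$ and an embedding of the $\alpha(K)\times\alpha(K)$ grid into the front face so that (i) each needed horizontal and vertical segment lies on edges of the $1$-skeleton of $M_n$ that survive all later iterations, and (ii) each of the $\alpha(K)^2$ connector segments that runs from front to back passes only through Cantor Dust points of the front face, so that Lemma \ref{lemma: pushing into menger sponge} guarantees the whole connector lies in $M$. Concretely I would take $n$ large enough (something like $n$ on the order of $\alpha(K)$, or a small multiple thereof) that within one face of $M_n$ we can find an $\alpha(K)\times\alpha(K)$ array of ``nodes'', each node being a little sub-square whose corner we can reach along surviving edges, and such that the horizontal coordinate and the vertical coordinate of every node and of every connecting route are ternary strings with no digit $1$ — i.e. the routes live in the Cantor Dust pattern $L_n$ identified in the proof of the lemma. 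The horizontal segments of the Arc Presentation get routed along edges at the appropriate ``row'' levels, the vertical segments along edges at the appropriate ``column'' levels on the back face, and wherever the Arc Presentation calls for a vertical strand to cross over a horizontal one, that crossing is resolved three-dimensionally: the vertical strand is already on the back face, the horizontal strand on the front face, and since in the Arc Presentation the vertical is always over the horizontal, the crossing information is automatically correct. One then closes up the curve using the connectors at the $2n$ endpoints; the result is a closed path on the edges of $M_n$ that is isotopic to $K$, and since edge points are never deleted, it also sits inside $M$.

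The main obstacle I expect is the bookkeeping in step (i)–(ii): one must exhibit a single scale $n$ at which the front face simultaneously contains enough surviving $1$-skeleton edges to carry an $\alpha(K)\times\alpha(K)$ grid of horizontal wires \emph{and} has those wires positioned at Cantor-Dust coordinates so the perpendicular connectors do not hit a hole, all while the back face carries the matching vertical wires aligned column-by-column with the front. This is really a claim that the Cantor Dust pattern inside a face, at iteration $n$, contains a scaled grid graph of side $\alpha(K)$ with the right incidences; it should follow by a direct induction or an explicit address-string construction (allot, say, the first $\lceil\log_3 \alpha(K)\rceil$ ternary digits of each coordinate to indexing the $\alpha(K)$ rows/columns using only digits $0$ and $2$, and use the remaining digits to route wires and dodge the central holes), but writing it cleanly is the crux. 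Everything else — that the resulting curve is isotopic to the Arc Presentation diagram of $K$, hence to $K$, and that it survives all later iterations because it lies on edges — is then routine, invoking Proposition \ref{prop: arc presentation exists} and Lemma \ref{lemma: pushing into menger sponge}. As a byproduct, since each $M_n$ has finitely many edges and hence finitely many closed edge-paths, and every tame knot arises this way for some $n$, one recovers the countability of tame knot types.
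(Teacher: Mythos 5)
Your proposal follows essentially the same route as the paper: take an Arc Presentation in grid form, lay one family of grid segments on the front face and the other on the back face, and join matching endpoints by straight connectors through the sponge via Lemma \ref{lemma: pushing into menger sponge}. The step you single out as ``the crux'' --- exhibiting a single scale $n$ at which the grid wires and all connectors sit at Cantor-Dust coordinates --- is exactly where the paper's proof does its one piece of real work, and it is resolved more simply than your address-string sketch suggests: one takes an iteration of the Cantor set with at least $\alpha(K)$ endpoints (the $k$-th iteration has $2^{k+1}$ of them) and shifts the grid lines to those endpoints. Since each such endpoint has no digit $1$ in its ternary expansion, Proposition \ref{prop: sierpinski carpet characterization} puts every full line $x=p_i$ (resp.\ $y=p_j$) on the face, every grid intersection $(p_i,p_j)$ in the Cantor Dust, and the shift preserves the order of the rows and columns, hence the diagram. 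So the part you defer is genuinely needed, but it is a two-line construction rather than an induction.

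One concrete slip: you place the \emph{horizontal} segments on the front face and the \emph{vertical} ones on the back, and then assert the crossing information is ``automatically correct.'' It is not --- the strand on the front face is the one nearer the viewer, hence the over-strand in the projection, so your assignment reverses every crossing of the Arc Presentation and produces the mirror image $K^{*}$ rather than $K$. The paper puts the vertical segments on the front face and the horizontal ones on the back for precisely this reason. The error is harmless for the truth of the theorem (apply your construction to an Arc Presentation of $K^{*}$, or just swap the two faces), but as written the claimed isotopy to $K$ fails.
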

\begin{proof}
For a given knot $K$, proposition \ref{prop: arc presentation exists} above shows that it admits an Arc Presentation
\begin{equation*}
    \{a_1, b_1\},..., \{a_n, b_n\},
\end{equation*}
where $a_1,..., a_n$ and $b_1,..., b_n$ are permutations of $1, 2, ... n$. 

Take an iteration, of the process that produces the Cantor set $C$, that has at least $n$ endpoints. This is possible since the $k^{th}$ iteration has $2^{k+1}$ endpoints. Among those endpoints pick $n$, say, $p_1 < p_2 < ...< p_n$.

We now consider the Arc Presentation codified as
\begin{equation*}
    \{p_{a_1}, p_{b_1}\},..., \{p_{a_n}, p_{b_n}\},
\end{equation*}
where $p_{a_i}$ is considered on the horizontal axis of the front face, while the $p_{b_j}$ is considered on the vertical axis of the front face (with origin in the lower left corner). 

The corresponding diagram has the same knot type. The only difference is that the distance between the vertical segments (or the horizontal segments) changed, since they were shifted to begin at certain endpoints, as opposed to be evenly distributed in $[0, 1]$. Notice that we have not introduced new intersections since we are preserving the order of both horizontal and vertical segments. We say we have shifted the grid diagram.

We now make an observation: if we pick a point on the front face $(x_0, 0, 0)$ such that $x_0$ is a point of the Cantor set, then $(x_0, y, 0)$ is entirely on the front face for $0\le y \le 1$. This follows from proposition \ref{prop: sierpinski carpet characterization} because $x_0$ does not have a $1$ in its ternary expansion, since it is an element of the Cantor set, and so $(x_0, y)$ satisfies the criterion given by the lemma. The analogous result hold for horizontal segments $(x, y_0, 1)$ if $y_0$ is an endpoint of the Cantor set. 

Since the connectivity graph associated to the Arc Presentation given above is constructed by line segments included in segments as those discussed in the previous paragraph, we conclude that all vertical segments are included entirely on the front face. We also conclude that all the horizontal segments can be translated to the back face and they will be included entirely in it.

Our final task is to show that an endpoint of a vertical segment, in the front face, can be joined with the corresponding vertex of the horizontal segment on the back. This is possible, by lemma \ref{lemma: pushing into menger sponge}, because the coordinates of such points are of the form $(x_0, y_0, 0)$ and $(x_0, y_0, 1)$ with $(x_0, y_0)$ in the Cantor Dust. Thus we can go within the sponge avoiding its holes!

With this we have been able to construct a knot inside the Menger Sponge. This knot is $K$ because its knot diagram, when projected on the front face, precisely recovers the Arc Presentation we began with. This concludes the proof.
\end{proof}

\begin{example}\label{ex: 3_1 in the menger}
    The above procedure done for the trefoil knot $3_1$ is as follows. The first image on the left shows the coordinates (just labeled for the $x$ axis) evenly spaced. The shifting described in the proof is shown in the second image. As we can see the vertical and horizontal lines have intersections in Cantor Dust points. 
    
    \begin{center}
        \includegraphics[scale = 0.5]{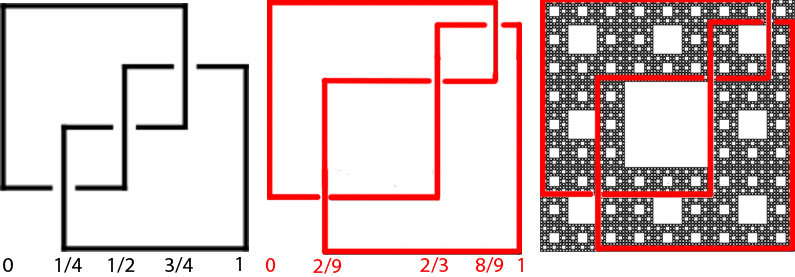}
    \end{center}
    The third image is the connectivity graph in the face of the Menger Sponge.
    
    After pushing we get the following
    image (with and without the Menger Sponge iteration):
    \begin{center}
        \includegraphics[scale = 0.6]{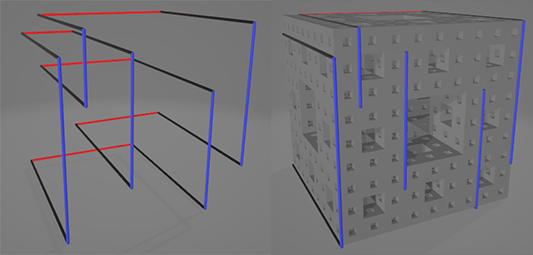}.
    \end{center}
\end{example}
\begin{example}\label{Ex: 4_1 in the Menger}
    Doing the above process with the Arc Presentation of the eight knot $4_1$ that we saw in example \ref{ex: 4_1 arch presentation} we get
    \begin{center}
        \includegraphics[scale = 0.6]{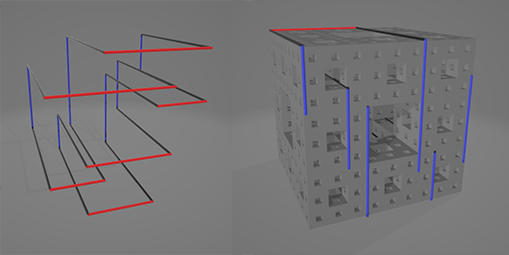}
    \end{center}
In the drawing on the left we have rotated the knot to show how it looks when seen from the back of the sponge.
\end{example}

A well known result that follows from the above is

\begin{corollary}
    The isotopy classes of tame knots form a countable set.
\end{corollary}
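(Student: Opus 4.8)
The plan is to deduce countability directly from the constructive proof of Theorem \ref{kinsponge}. The key observation is that the embedding produced there is entirely determined by a finite combinatorial datum: an Arc Presentation in grid form, i.e. an ordered list of $n$ unordered pairs $\{a_1,b_1\},\dots,\{a_n,b_n\}$ where $a_1,\dots,a_n$ and $b_1,\dots,b_n$ are permutations of $1,\dots,n$. For each fixed $n$ there are at most $(n!)^2$ such lists, hence only finitely many, and therefore the set of all Arc Presentations in grid form over all $n$ is a countable union of finite sets, hence countable.

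First I would invoke Proposition \ref{prop: arc presentation exists}: every tame knot admits an Arc Presentation in grid form. This gives a surjection from the (countable) set of all grid-form Arc Presentations onto the set of isotopy classes of tame knots — surjective because every class is hit, by the proposition. A surjective image of a countable set is countable (or finite), which yields the claim. I would phrase this cleanly: let $\mathcal{A} = \bigcup_{n\ge 1}\mathcal{A}_n$ where $\mathcal{A}_n$ is the set of valid grid-form Arc Presentations on an $n\times n$ grid; each $\mathcal{A}_n$ is finite, so $\mathcal{A}$ is countable; the map $\mathcal{A}\to\{\text{isotopy classes of tame knots}\}$ sending a presentation to the type of the knot it encodes is well-defined and, by Proposition \ref{prop: arc presentation exists}, surjective; hence the target is countable.

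Strictly speaking one could bypass the Menger Sponge entirely here, since the countability follows from the existence of Arc Presentations alone; but since the corollary is advertised as a consequence of the \emph{proof} of Theorem \ref{kinsponge}, I would alternatively frame it through that construction: the proof associates to each knot type an explicit embedded circle in the $1$-skeleton of some finite iteration $M_n$ of the Menger Sponge, and the combinatorial type of such an embedded path in the finite graph $M_n^{(1)}$ takes only finitely many values for each $n$; summing over $n$ gives a countable list exhausting all knot types. Either framing works, and I would present whichever reads more naturally in context — probably the Arc Presentation version, with a sentence noting it is exactly the data used in the proof above.

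The main (and only) subtlety is the direction of the surjection and the implicit invocation of countable choice: we need that \emph{every} tame knot arises from \emph{some} grid-form Arc Presentation, which is precisely Proposition \ref{prop: arc presentation exists}, so there is no genuine obstacle — the work is entirely in the already-cited existence result, and the corollary is a one-paragraph cardinality argument. The only thing to be careful about is to state that we are counting isotopy classes, not presentations, and that distinct presentations may give the same class (which only helps: it makes the target no larger than a countable set).
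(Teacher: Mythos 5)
Your proposal is correct, and in fact it contains the paper's own argument as your second framing. The paper's proof is the Menger Sponge route: there are countably many finite iterations $M_n$, each has a finite $1$-skeleton and hence carries only finitely many closed paths, and by Theorem \ref{kinsponge} every knot type is realized by one of these paths; the paper states this very tersely (``the number of finite iterations is countable; the result follows from the theorem''), leaving implicit the finiteness of cycles in each $M_n^{(1)}$, which your version makes explicit. Your preferred framing --- counting grid-form Arc Presentations, $\lvert\mathcal{A}_n\rvert\le (n!)^2$, and surjecting onto isotopy classes via Proposition \ref{prop: arc presentation exists} --- is a genuinely different and slightly more economical route: it bypasses the sponge entirely and rests only on the existence of Arc Presentations, which is arguably the cleaner proof in isolation. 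What the paper's framing buys is thematic coherence: the corollary is presented as a byproduct of the embedding theorem, reinforcing the point that the countable family of finite graphs $M_n^{(1)}$ exhausts all tame knot types. Either write-up is acceptable; your observation that distinct presentations (or distinct embedded circles) may represent the same class, and that this only helps, is exactly the right thing to say.
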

\begin{proof}
    The number of finite iterations of the Menger Sponge is countable. The result now follows from theorem \ref{kinsponge}.
\end{proof}

\section{Partial Answer for the Sierpinski Tetrahedron}\label{sec: partial answer sierpinski}

Contrary to what we are able to prove for the Menger Sponge, we do not know if all knots can be embedded into a finite iteration (or the final one) of the Sierpinski Tetrahedron. 

Since we do not have a knot diagram that is particularly suited for the Sierpinski Tetrahedron, we have to develop a different way of searching through it. We now explain how we do this.

\subsection{The Combinatorial Representation of the Tetrahedron}

In order to make the arguments for our proofs, as well as the searches by inspection that we did, it is convenient to flatten the different iterations of the Sierpinski Tetrahedron.

We do so as shown in the following figure for the first and second iterations:
\begin{center}
\includegraphics[scale = 0.05]{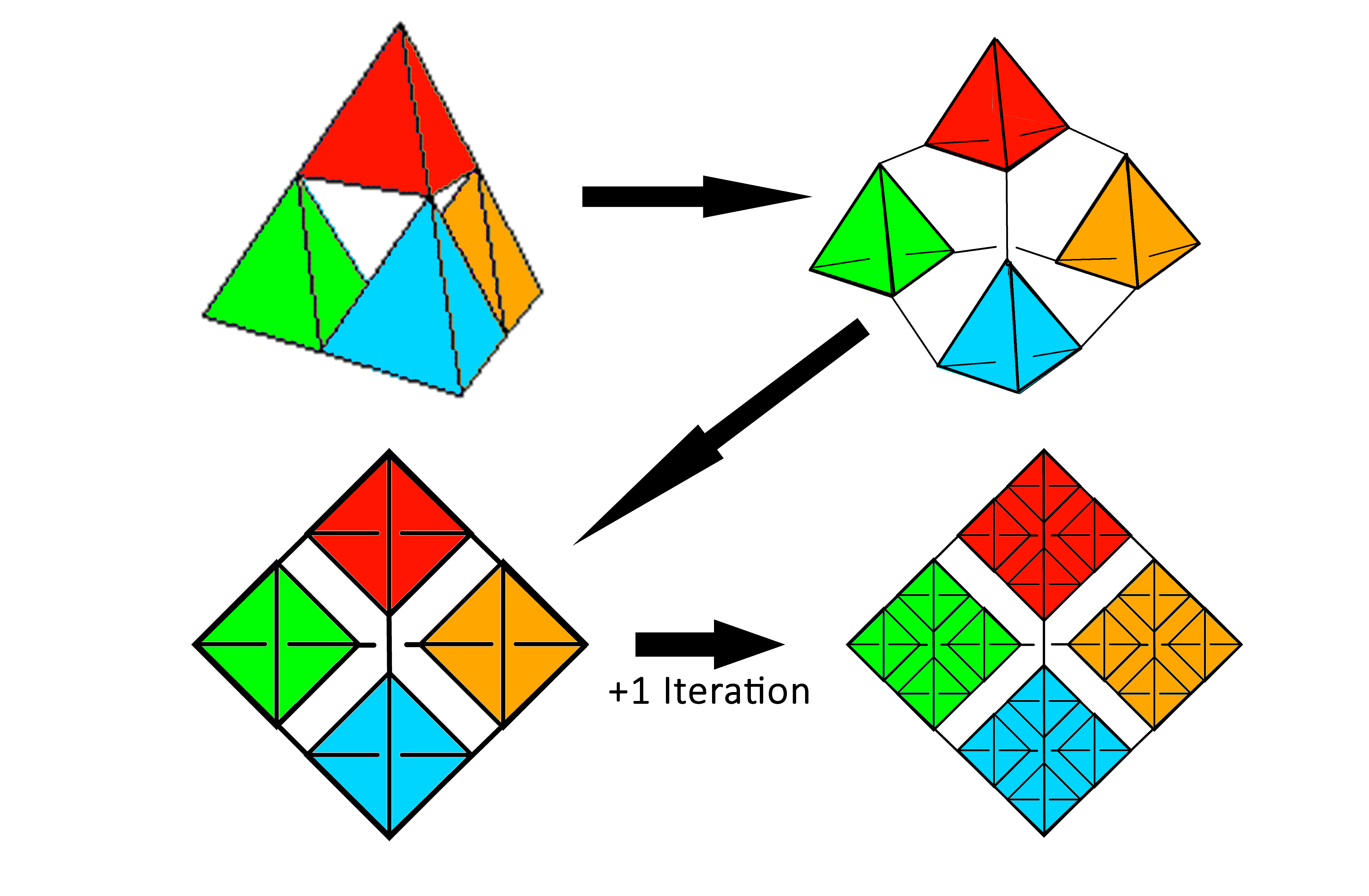}
\end{center}
The final figure now can be used as a map where we look for the planar diagrams of knots. Notice that the graphs that correspond to the successive iterations of the tetrahedron are not planar after the first two. This can be easily proven via Kuratowski's theorem.

\begin{example}\label{ex: example 3_1 sierpinski}
Looking for $3_1$ in the combinatorial representation we get
\begin{center}
    \includegraphics[scale=0.05]{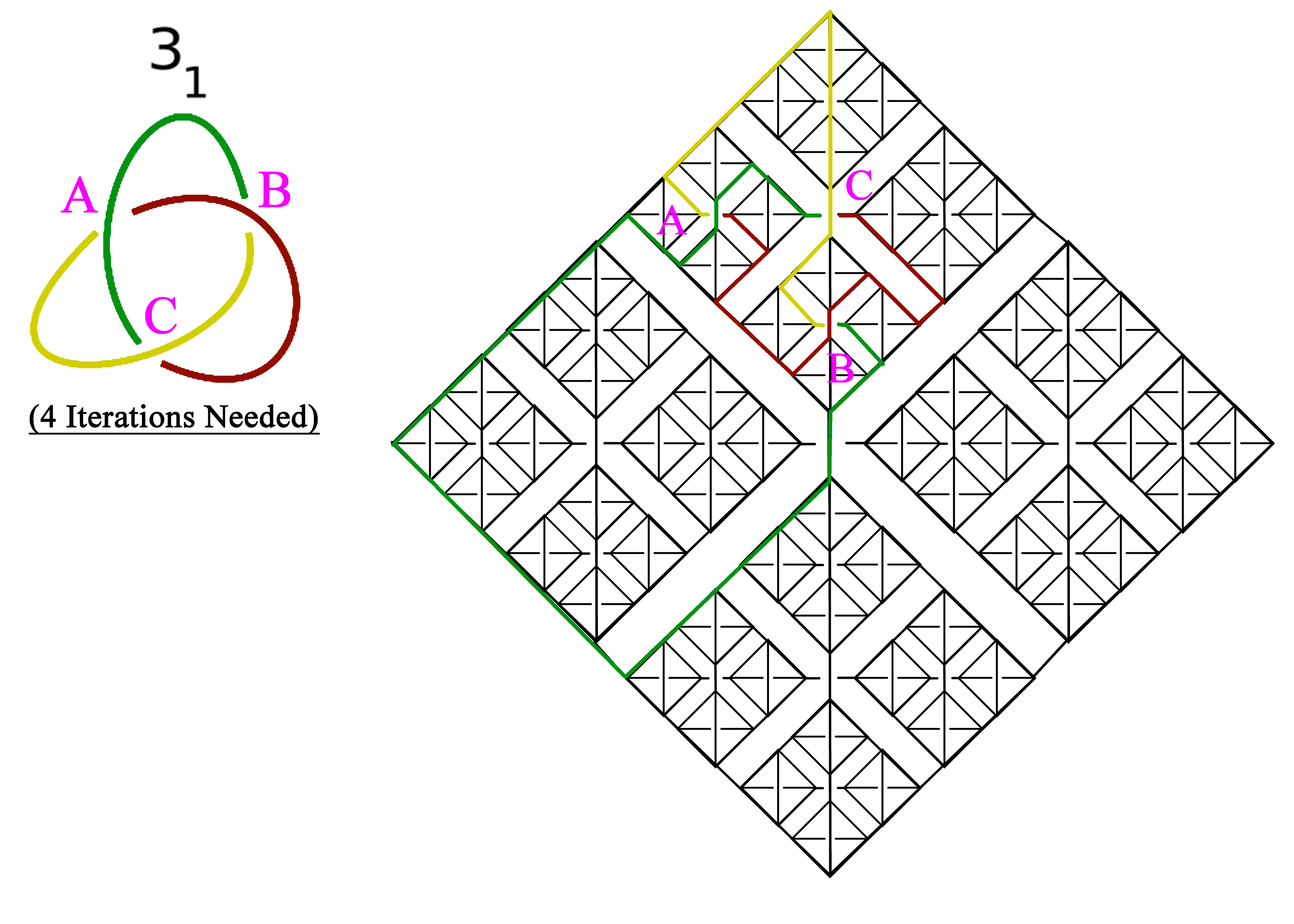}
\end{center}
This representation then becomes the following embedding of $3_1$ in the tetrahedron.
\begin{center}
    \includegraphics[scale = 1.0]{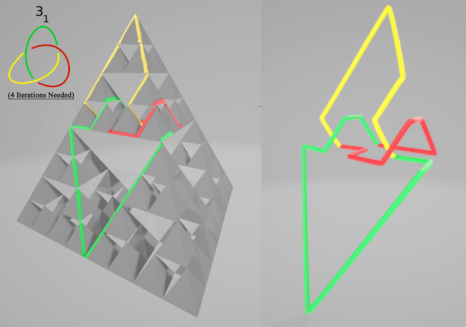}
\end{center}

\end{example}

We emphasize that the combinatorial representation makes the search easier since we avoid the complex nature of the three dimensionality of the tetrahedron. Instead we have this interesting map of how to move in it. The authors were unable to find by inspection many knots in the tetrahedron until we had the combinatorial representation available.

\begin{example}\label{ex: 4_1 in pyramid}
    We obtain in the combinatorial representation the following knot diagram of the eight knot $4_1$.
    \begin{center}
        \includegraphics[scale = 0.5]{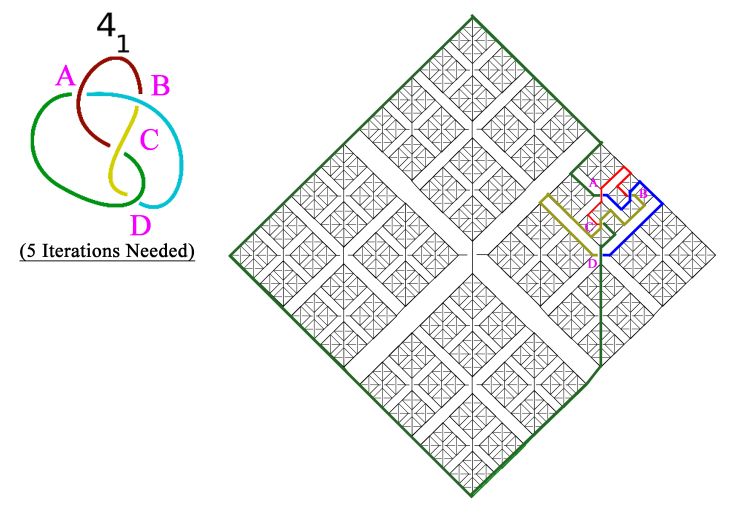}
    \end{center}
    This becomes the following embedding in the tetrahedron.
    \begin{center}
        \includegraphics[]{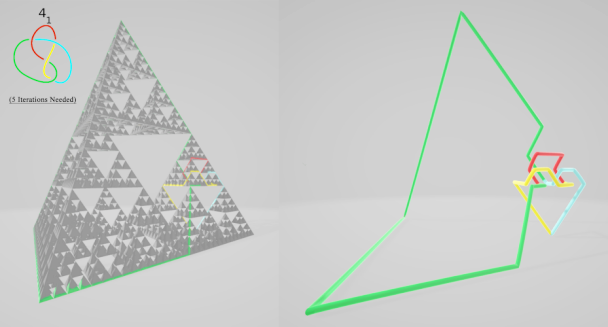}
    \end{center}
    
\end{example}

\subsection{Pretzel Knots}

To prove a given knot $K$ is inside the tetrahedron it would be enough that some knot diagram of $K$ is inside one big enough combinatorial representation of it. We were unable to prove this systematically for all knots but succeeded for Pretzel Knots. 

The results will follow from a series of lemmas, each of which is straightforward from the corresponding picture so we will minimize the discussion to the essentials in their proofs.

\begin{lemma}\label{lemma: independent helices}
Any isolated helix can be put inside a finite iteration of the tetrahedron.
\end{lemma}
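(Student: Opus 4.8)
The plan is to show, purely combinatorially, that a single helix with any number of crossings and either orientation fits inside the combinatorial representation of a sufficiently deep iteration of the Sierpinski Tetrahedron. First I would recall what a helix looks like as a tangle: it is a vertical stack of $|q|$ crossings between two strands, all of the same sign (the sign of $q$), with two strands entering at the top and two leaving at the bottom. The key observation is that the combinatorial representation of $t_n$ contains a long ``ladder'' of sub-tetrahedra running along any one of its edges, and that at each level the local picture is a copy of the first-iteration diagram, which already contains one crossing that can be resolved either way; stringing $k$ of these copies together along a path in the graph produces a stack of $k$ crossings of whichever sign we choose at each step.

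The steps, in order, would be: (1) identify in the combinatorial picture of $t_n$ a sequence of $n-1$ (or however many we need) nested sub-triangles that form a ``corridor,'' and mark the two tracks along which the two strands of the helix will run; (2) show that within one unit of the corridor the two tracks can be made to cross once, with the over/under choice free, by exhibiting the explicit local diagram (this is where the picture does all the work); (3) check that consecutive units glue together consistently, i.e. the exit points of one crossing are the entry points of the next, so that $k$ units give exactly a $k$-crossing helix; (4) observe that all crossings produced this way have the same sign, or can be made to, so that both $q>0$ and $q<0$ helices are realized; and (5) count: to get $|q|$ crossings we need roughly $|q|$ nested levels, so iteration number on the order of $\log$ or linear in $|q|$ suffices, and in any case \emph{some} finite iteration works, which is all the lemma claims.

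The main obstacle I expect is step (3): making sure the corridor is genuinely ``straight'' enough that the two strands can be routed in parallel through an arbitrarily long sequence of sub-tetrahedra without being forced to interact with the branching structure of the fractal (each sub-tetrahedron has three children meeting at shared vertices, so a naive path might be pinched or forced through a vertex where three sub-tetrahedra meet). One needs to pick the corridor along a face or edge where the two tracks stay on ``opposite sides'' of every branch point, and verify the over/under data is consistent with the three-dimensionality — exactly the subtlety the authors flag when contrasting their diagrams with the graphs $S_4^n$ of \cite{SierpinskiGraphs}. Since the paper explicitly says each such lemma ``is straightforward from the corresponding picture,'' I would expect the actual proof to consist mainly of one carefully drawn figure of an isolated helix threaded through a few levels of the combinatorial representation, together with a one-line remark that the pattern extends to any length and either chirality.
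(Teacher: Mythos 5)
Your proposal follows essentially the same route as the paper: the actual proof is exactly the single figure you predicted, exhibiting a helix of arbitrary length threaded through the combinatorial representation, with the sign of the helix controlled by which strand comes out on top at the first crossing (the strand from $A$ for a negative helix, from $B$ for a positive one). The one point your step (3) glosses over, and which the paper addresses explicitly, is that consecutive crossing units do \emph{not} glue with the exit points of one being exactly the entry points of the next: the connecting paths introduce spurious extra crossings, and the paper disposes of these by observing they are each removable by a Reidemeister move of type I. Also note that the over/under data at each crossing is not ``freely chosen'' as in your step (2) but is dictated by the three-dimensional routing; the paper gets both chiralities by swapping the roles of the two entry strands rather than by resolving a crossing either way.
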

\begin{proof}
The images below show that every helix can be found in a big enough iteration of the combinatorial representations. 
\begin{center}
    \includegraphics[scale = 0.5]{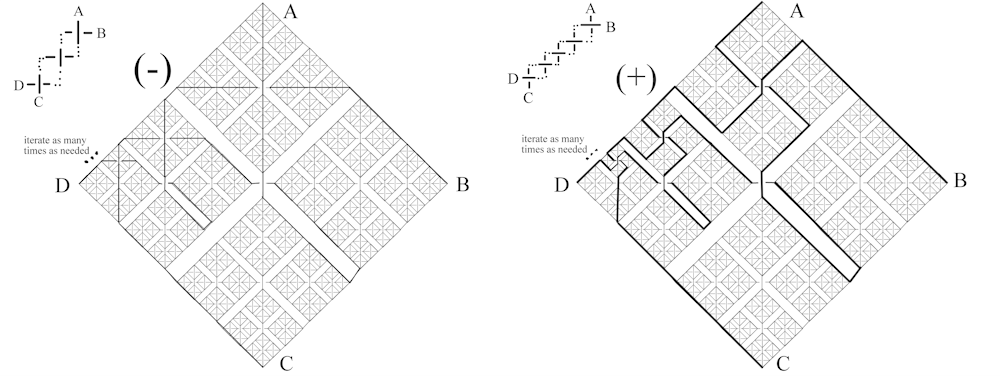}
\end{center}

Notice that what distinguishes the positive crossing from the negative one is which strand comes on top on the first crossing. The one from A for the negative helix and the one from B from the positive helix. This is preserved in the drawings.

Finally, we draw the attention of the reader to the following point: when going from one of the crossings of one helix to the next, we produced the following type of crossing:
\begin{center}
    \includegraphics[]{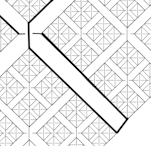}
\end{center}
These crossings are irrelevant, from the perspective of knot type, as they can be removed by a twist (i.e. a Reidemiester move of type I).
\end{proof}

We are now ready to prove
\begin{theorem}\label{thm: pretzel in pyramid}
All Pretzel Knots are inside a finite iteration of the Sierpinski Tetrahedron.
\end{theorem}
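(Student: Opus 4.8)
The plan is to build a Pretzel knot $K$ with parameters $(q_1,\dots,q_n)$ inside a sufficiently large iteration of the Sierpinski Tetrahedron by assembling the individual helices produced in Lemma \ref{lemma: independent helices} and then connecting them according to the Pretzel pattern. First I would recall the standard Pretzel diagram: it consists of $n$ vertical ``twist regions'' (helices) arranged side by side, with all their top endpoints joined along a horizontal arc at the top and all their bottom endpoints joined along a horizontal arc at the bottom. So the task reduces to two things: (a) placing $n$ disjoint helices, each realizing the correct sign and number of crossings $q_i$, in disjoint sub-tetrahedra of some iteration $t_N$, and (b) routing the connecting strands — two ``rails'' at top and bottom — through the combinatorial representation so that they link consecutive helices in order without introducing essential crossings.

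For step (a), I would exploit the self-similar structure: each of the four corner sub-tetrahedra of $t_1$ is a scaled copy of the whole fractal, and more generally $t_N$ contains $4^N$ sub-copies of $t_{N-k}$ for $k \le N$. By Lemma \ref{lemma: independent helices}, an isolated helix with $|q_i|$ crossings fits in some iteration $t_{m_i}$ of the combinatorial representation; taking $N$ large enough and lining up $n$ disjoint sub-tetrahedra along one ``edge layer'' of the flattened combinatorial diagram, I can place all $n$ helices simultaneously, each in its own region, with the sign of each helix chosen by the A-strand/B-strand convention spelled out in the proof of Lemma \ref{lemma: independent helices}. For step (b), the key picture-level claim is that the combinatorial representation has enough ``free'' edges running along the top boundary and along the bottom boundary of the arrangement to serve as the two Pretzel rails; each rail is a path in the graph $S_4^n$-type combinatorial diagram that visits the tops (resp. bottoms) of the helices left to right. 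As in the previous lemmas, the crossings created where a rail passes near a helix or changes sub-tetrahedron are Reidemeister-I type ``irrelevant crossings'' that can be untwisted, so the resulting knot diagram, read off the combinatorial representation, is exactly the Pretzel diagram for $(q_1,\dots,q_n)$ up to isotopy.

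I would organize the write-up as a sequence of figures: one showing a single helix sitting in a sub-tetrahedron with labeled entry/exit points on its boundary (this is essentially already in Lemma \ref{lemma: independent helices}); one showing two adjacent sub-tetrahedra with a connecting arc threaded between them, verifying the arc stays in the fractal and the new crossing is nugatory; and one showing the global assembly of $n$ helices with the two rails closed up, from which the Pretzel diagram is visually immediate. The bound on the number of iterations would come out of the argument: $N$ is roughly $\max_i m_i$ plus a constant accounting for the horizontal room needed to fit $n$ helices side by side and route the rails, i.e. $N = O\!\left(\max_i \log |q_i| + \log n\right)$, though for the theorem statement only finiteness is needed.

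The main obstacle I anticipate is step (b) — the \emph{global compatibility} of the connecting arcs. Placing one helix is easy and placing $n$ of them in disjoint sub-tetrahedra is easy, but the Sierpinski Tetrahedron is not planar beyond the second iteration (as the excerpt notes), so one must be careful that the two rails and the $n$ helices can all be drawn simultaneously in the combinatorial representation without forced extra crossings between the rails and the \emph{interiors} of helices other than at the intended junctions, and without the rails being forced to cross each other. Concretely, I need to check that the combinatorial diagram admits a single consistent drawing — with all vertices resolved consistently with the three-dimensionality of the tetrahedron, exactly the subtlety flagged in the discussion of $S_4^n$ — that simultaneously contains all $n$ helix sub-diagrams and both rails. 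I expect this is handled by choosing the $n$ sub-tetrahedra to all lie in a single ``row'' of a large iteration so the rails can run monotonically along the boundary, but making that choice precise and verifying no hidden crossing is forced is where the real work lies; everything else is bookkeeping plus the Reidemeister-I observation already used in Lemma \ref{lemma: independent helices}.
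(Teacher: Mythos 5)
Your proposal follows essentially the same route as the paper: realize each helix separately via Lemma \ref{lemma: independent helices}, place them side by side in disjoint regions of a large enough combinatorial representation, and close up the Pretzel pattern with one connecting path along the top and one along the bottom, dismissing the incidental crossings as Reidemeister-I twists. The paper carries out exactly this plan (its figure connects $A_1$ to $B_k$ through the upper part and $D_1$ to $C_k$ through the lower part of the combinatorial representation), so your write-up matches the intended argument, including the honest flagging of the routing step as the part requiring an explicit picture.
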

\begin{proof}
We have seen in lemma \ref{lemma: independent helices} that individual helices can be put into big enough iterations of the Sierpinski Tetrahedron. We now show that any number of them with any given orientations can be successively joined.

We can simplify a Pretzel Knot diagram to the following simple form
\begin{center}
    \includegraphics[scale = 0.5]{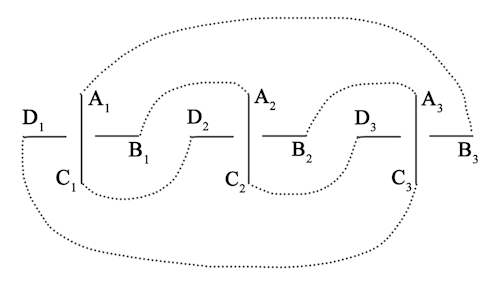}
\end{center}
Each cross represents an helix with $A_i, B_i, C_i, D_i$ representing the different endpoints of the helices as used in lemma \ref{lemma: independent helices}. 

We have proven that each helix is independently found. We now show we can join them as required. The following picture shows how (for concreteness we did the drawing for three helices):
\begin{center}
    \includegraphics[]{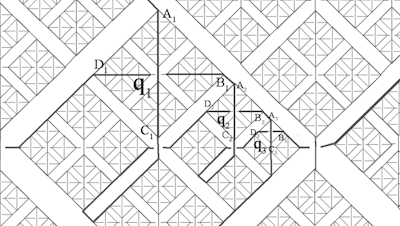}
\end{center}
Notice that $A_1$ gets connected with $B_k$ by a path going through the upper part of the combinatorial representation, while $D_1$ gets connected with $C_k$ by one going through the bottom part of the combinatorial representation.
\end{proof}

\begin{example}
We emphasize that the above process is algorithmic. For example, the knot $8_{15}$ is a Pretzel knot. Concretely, it is $P(-3, 1, 2, 1, -3)$. To find this knot by inspection would be next to impossible, but using the method given in the above proofs and keeping track of the number of iterations needed for all helices to exists as required, we see we can put it inside the eleventh iteration. 
\end{example}

As we have said before, not all knots are Pretzel Knots. Thus, our previous algorithm does not produce the inclusion of every knot in the Sierpinski Tetrahedron. However, the authors do not really see an impediment for other knots to be found there. Thus we put forward the following

\begin{conjecture}
    All knots can be found a finite iteration of the Sierpinski Tetrahedron.
\end{conjecture}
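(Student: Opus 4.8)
The plan is to establish a universality statement for the combinatorial representations of the Sierpinski Tetrahedron that plays the role the grid diagram / connectivity graph plays for the Menger Sponge in Theorem~\ref{kinsponge}. The first step is to replace an arbitrary knot $K$ by a diagram $D$ in \emph{twist-region form}: every knot diagram can be isotoped so that its crossings are organized into finitely many maximal helical blocks joined by simple arcs, so that the combinatorial content of $D$ is a finite $4$-valent planar graph $G$ whose vertices carry signed twist numbers and whose edges record how the blocks are connected. Pretzel knots are precisely the case in which $G$ is a single cycle, and Theorem~\ref{thm: pretzel in pyramid} handles exactly that; the goal is to allow an arbitrary planar $G$. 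Montesinos knots (rational tangles in a cycle) and arborescent knots (tangles in a tree) are natural intermediate targets on the way to the general case.

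The second step is to upgrade Lemma~\ref{lemma: independent helices}: I would show that an isolated twist region of any signed length can be embedded in a large enough combinatorial representation in each of the finitely many positions and orientations needed to assemble $D$. The argument is the same picture-chasing as in Lemma~\ref{lemma: independent helices}, now carried out at an arbitrary scale, using that the recursive self-similarity of the combinatorial representation furnishes arbitrarily many disjoint copies of any fixed finite sub-pattern.

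The heart of the matter is a \emph{routing lemma}: for every $4$-valent planar graph $G$ there is an $n=n(G)$ such that a subdivision of $G$ embeds in the level-$n$ combinatorial representation, with each vertex of $G$ confined to one cell and the edges of $G$ carried by edge-paths of the representation that cross only where $G$ forces them to, and where every such extra crossing is removable by a Reidemeister~I move (as with the ``irrelevant crossings'' in the proof of Lemma~\ref{lemma: independent helices}). Here one must genuinely use that the graphs $S_4^n$ are non-planar for $n\ge 3$: there is more room to route connections than a planar picture would permit, and I expect a recursive construction --- lay out the coarse structure of $G$ in a level-$1$ or level-$2$ representation, then push finer structure into the sub-tetrahedra, keeping a careful inventory of the ``ports'' available on the boundary of each sub-tetrahedron --- to do the job, in the same spirit as the joining of helices in Theorem~\ref{thm: pretzel in pyramid}.

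Finally, one must resolve the issue highlighted in the discussion of \cite{SierpinskiGraphs}: after drawing $D$ inside the combinatorial representation one has to check that the over/under at every genuine crossing can be chosen simultaneously consistently with the actual three-dimensional geometry of $t_n$, i.e. that the flattened picture can be lifted back. For the twist-region crossings this is exactly the upgraded Lemma~\ref{lemma: independent helices}; the crossings produced by routing are Reidemeister~I trivial and so impose no constraint. Transferring the resulting picture back through the flattening correspondence presents $K$ as a cycle in the $1$-skeleton of $t_n$, and since every edge of $t_n$ is a union of edges of $t_{n+1}$ its points are never removed, so $K\subset T$. The main obstacle is the routing lemma together with this realizability check for general planar connection patterns: controlling which strand lies on top at the routing crossings, and proving that \emph{any} planar $G$ --- not just the cyclic pattern of a pretzel knot --- is realizable by edge-paths of the self-similar graph without being forced into the wrong resolution. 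I expect the recursive port-bookkeeping to settle it, but making it precise for all $G$ is where the real work lies.
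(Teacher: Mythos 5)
This statement is not proven in the paper at all: it is stated explicitly as an open conjecture, and the authors say they ``were unable to prove that in general any knot is inside the tetrahedron.'' Your proposal does not close it either. What you have written is a plausible research programme whose decisive step --- the ``routing lemma'' asserting that any $4$-valent planar connection pattern among twist regions can be carried by edge-paths of the combinatorial representation, with every crossing either forced correctly or removable by a Reidemeister~I move --- is precisely the open problem, and you acknowledge as much when you say that making it precise ``is where the real work lies.'' The reduction to twist-region form is essentially free (every crossing is a twist region of length one), so it does not by itself reduce the difficulty; all of the content is in the routing, and none of it is supplied.

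Two specific points deserve emphasis. First, the paper's pretzel argument (Theorem~\ref{thm: pretzel in pyramid}) works because the connection pattern among helices is a single cycle, which can be threaded through the top and bottom of the combinatorial representation without interfering with itself; nothing analogous is exhibited for a graph with branching, and the ``recursive port-bookkeeping'' you invoke is named but not carried out. Second, your appeal to the non-planarity of the graphs $S_4^n$ cuts the other way from how you use it: in the flattened combinatorial representation each crossing comes with an over/under resolution \emph{dictated} by the three-dimensional geometry of $t_n$ (this is exactly the caveat the paper raises when discussing \cite{SierpinskiGraphs}), so non-planarity is not free routing room but an additional consistency constraint --- you must show the forced resolutions at routing crossings never conflict with the knot type you are trying to realize. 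Until the routing lemma and this realizability check are actually proven, the statement remains a conjecture, as the paper leaves it.
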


\section{Comparison between Fractals}\label{sec: comparison}

Now that we have discussed separately the Menger Sponge and the Sierpinski Tetrahedron, let us compare the information each of them gives. For this purpose we give the following
\begin{definition}
Let $K$ be a knot. Define $M(K)$ as the minimum number of iterations needed to embed $K$ into $M_n$. Similarly, define $S(K)$ as the minimum number of iterations needed to embed $K$ into $S_n$, supposing such $n$ exists for $K$.
\end{definition}
As we have mentioned in the introduction, the Menger Sponge is a more complicated object than the Sierpinski Tetrahedron. In this direction we will prove below that any knot appears faster in the Menger Sponge than it does in the Sierpinski Tetrahedron.  Everything follows from the next fact.

\begin{lemma}
    The one-skeleton of $S_n$ can be embedded into the one-skeleton of $M_n$ for $n = 0, 1, 2,...$
    Furthermore, such an embedding $i_n$ can be constructed in such a way that it preserves knot type. That is, if $K$ is a knot in the one-skeleton of $S_n$, then $i_n(K)$ is isotopic to $K$ (i.e. the embedding does not produce extra knotting).
\end{lemma}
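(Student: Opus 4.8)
I would prove this by induction on $n$, using that the two fractals are built by matched self-similar recipes: $S_n$ is the union of four $\tfrac12$-scaled copies of $S_{n-1}$ placed at the corners of the ambient tetrahedron, any two of which meet in a single point (the midpoint of one of the six edges of that tetrahedron), while $M_n$ is the union of twenty $\tfrac13$-scaled copies of $M_{n-1}$. To make the induction carry, I would actually prove a strengthened statement: for every $n$ there is an embedding $i_n$ of the one-skeleton of $S_n$ into the one-skeleton of $M_n$ which is ambient isotopic, inside a ball containing $M_n$, to the standard inclusion of the one-skeleton of $S_n$ into $\mathbb R^3$, and which carries the four extreme corners of $S_n$ to four vertices of the one-skeleton of $M_n$ in a fixed configuration near one corner of $M_0$. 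The first clause is the whole point: an embedding that is the restriction of an ambient isotopy sends every cycle in the domain to an isotopic one, so in particular it preserves knot type.

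For the base case $n=0$ one must embed $K_4$, the one-skeleton of a tetrahedron, topologically into the cube graph. This can be done: send the four vertices of $K_4$ to a cube vertex $p_0$ and its three neighbours $p_1,p_2,p_3$; send the three edges of $K_4$ at $p_0$ to the cube edges $p_0p_1,p_0p_2,p_0p_3$; and send the remaining three edges of $K_4$ to three internally-disjoint length-two paths joining $p_1,p_2,p_3$ through three of the four further cube vertices. (One checks such disjoint paths exist; note that, since the cube graph is bipartite and $K_4$ is not, these three edges genuinely have to become paths of length $\ge 2$ — this is the seed of the difficulty in the inductive step.) Since $K_4$ is planar the only knot it carries is the unknot, so the ambient-isotopy clause is automatic.

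For the inductive step, assume $i_{n-1}$ with the two properties. Choose four subcubes $Q_1,\dots,Q_4$ of $M_n$ — say one corner subcube together with the three edge subcubes meeting it, which are pairwise adjacent and never deleted — and into $Q_c$ put a scaled, rotated copy of $i_{n-1}$, embedding the copy of $S_{n-1}$ at the $c$-th corner of the ambient tetrahedron into the one-skeleton of $Q_c$. The four extreme corners of $S_n$ then land where prescribed, and it remains to realise the six identifications gluing $S_n$ out of the four copies of $S_{n-1}$: for each pair $c\neq c'$ the midpoint $m_{c,c'}$ is an extreme corner of both the $c$-copy and the $c'$-copy, so those two extreme corners must be carried to the same vertex of the one-skeleton of $M_n$. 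As the base case already shows, this cannot in general be done by letting the relevant extreme corners sit at adjacent cube vertices; instead, for each pair one routes a short connecting arc through the one-skeleton of $M_n$ lying outside $Q_1\cup\dots\cup Q_4$ (through the remaining sixteen subcubes, near the edge or face-diagonal of $M_n$ joining $Q_c$ to $Q_{c'}$), and $m_{c,c'}$ is sent to a point of that arc; the six arcs are placed in disjoint regions so the whole map is injective. Finally, because each $Q_c$ carries (by clause (a)) an ambiently-standard copy of the one-skeleton of $S_{n-1}$, and these sit in essentially disjoint balls joined by finitely many unknotted, pairwise-unlinked arcs in disjoint tubes, the resulting spatial graph is ambient isotopic to the standard one-skeleton of $S_n$; together with the prescribed placement of the extreme corners this closes the induction.

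The hard part is the inductive step, and within it the gluing. First, the midpoints $m_{c,c'}$ are high-valence vertices of the one-skeleton of $S_n$ (three edge-germs from the $c$-copy and three from the $c'$-copy meet there), and these two bunches of germs live in two subcubes that only touch in a low-dimensional face, so a single connecting arc carrying all of $m_{c,c'}$ is not immediately enough; one must fan the six germs at $m_{c,c'}$ out into $M_n$ consistently, and exactly this dictates which four vertices the strengthened hypothesis (b) should prescribe (and may force the four receiving vertices inside a given $Q_c$ to lie slightly inside $Q_c$ rather than at its corners). Second, one has to choose the six connecting arcs genuinely pairwise disjoint and each unknotted and unlinked, and then invoke the routine but not quite trivial spatial-graph fact that gluing ambiently-standard pieces in disjoint balls by such arcs produces an ambiently-standard whole — this is what actually delivers the "preserves knot type" half of the lemma. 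An alternative route, matching the paper's use of the combinatorial representation, would be to draw the combinatorial diagram of $S_n$ on the boundary of $M_n$ at the same level $n$ and resolve its crossings by pushing into the sponge exactly as in the proof of Theorem \ref{kinsponge}; this too must confront the same fan-out bookkeeping at the branch vertices of the diagram.
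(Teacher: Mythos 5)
Your overall strategy --- induction on $n$ driven by the matched self-similarity of the two fractals, strengthened to an ambient-isotopy statement so that knot preservation comes for free --- is the same idea as the paper's, but you organize the induction top-down (place four copies of $i_{n-1}$ into four subcubes of $M_n$ and glue them along the six edge-midpoints), whereas the paper works bottom-up: it exhibits explicit embeddings of $S_0$ into $M_0$ and of $S_1$ into $M_1$ by pictures, and then passes from $i_n$ to $i_{n+1}$ by substituting each smallest embedded $S_0$ (which occupies a single subcube, i.e.\ a copy of $M_0$ that becomes a copy of $M_1$ after one further subdivision) by the fixed $S_1$-in-$M_1$ pattern, subject to the condition that the outer corners of the substituted pattern land exactly where the corners of the replaced $S_0$ were (the ``colours match'' condition). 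The point of that organization is that it never has to glue anything: a vertex shared by two adjacent small tetrahedra is a single vertex of the embedded graph at every stage and remains so after substitution, and since the substitutions happen in disjoint subcubes no extra knotting or linking is introduced.

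Your version does have to glue, and that is where a genuine gap sits --- one you flag yourself as ``the hard part'' but do not close. Each edge-midpoint $m_{c,c'}$ is a degree-$6$ vertex of the one-skeleton of $S_n$. If the copy in $Q_c$ sends its corner to a vertex $u$ and the copy in $Q_{c'}$ sends its corner to a different vertex $v$, then adding a connecting arc from $u$ to $v$ produces two degree-$4$ vertices joined by an edge, which is not homeomorphic to a neighbourhood of a degree-$6$ vertex (a cone on six points); so the resulting spatial graph is not an embedding of the one-skeleton of $S_n$ at all. Your proposed repair --- send $m_{c,c'}$ to a single point of the connecting region and fan all six edge-germs out to it --- is the right idea but is precisely the construction that must be carried out: for each of the six midpoints simultaneously one needs a vertex of the one-skeleton of $M_n$ reachable by three pairwise disjoint arcs from inside $Q_c$ and three from inside $Q_{c'}$, all disjoint from the other five fans and from the embedded copies, and one must still verify ambient-standardness of the assembled graph. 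None of this is done, and it is not obviously doable for your particular choice of $Q_1,\dots,Q_4$ (a corner subcube and the three edge subcubes meeting it pairwise share only edges, not faces, of the subdivision). The cleanest fix is to adopt the paper's bottom-up substitution, which dissolves the gluing problem by construction; otherwise you need to prove a local ``junction lemma'' at the midpoints before your induction closes. Your base case ($K_4$ subdivided into the cube graph via $p_0$, its three neighbours, and three internally disjoint length-two paths) is correct and is a perfectly good substitute for the paper's picture.
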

\begin{proof}
    We proceed by induction. For $M_0$ and $S_0$ the inclusion is as shown in the following picture:
      \begin{center}
           \includegraphics[scale = 0.7]{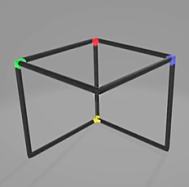}
       \end{center} 
    The four coloured vertices, out of the eight vertices of the cube $M_0$, correspond to those of the tetrahedron $S_0$ under the embedding. 
    For $S_1$ and $M_1$ we have the following picture:
       \begin{center}
           \includegraphics[scale = 0.7]{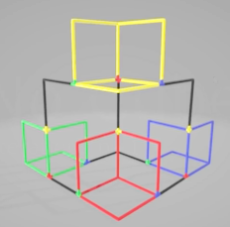}
       \end{center} 
    (Notice how this graph is isomorphic to $S_4^2$ as show in \cite[Figure 1(right), page 567]{SierpinskiGraphs}.)
       
    To produce $S_2$ inside $M_2$ what we do is to substitute each of the coloured $S_0$, inside the $S_1$, by the graph of $S_1$ above. This produces the $S_{2}$. We proceed in this way, inductively, to produce $S_{n+1}$ inside $M_{n+1}$: we substitute each of the $S_0$ in $S_{n+1}$ by $S_1$ above, with the condition that the colours match (i.e. the red corner is where the red cube goes, etc.). For the colours to match one has to swap green and blue in half the cubes, but this doesn't change the graph type.

    Since the graphs we are producing are isomorphic to the skeletons of $S_0, S_1, S_2, S_3,...$ because they replace the last $S_0$ by the corresponding subdivision into $S_1$, which is exactly how we get successive iterations of the Sierpinski Tetrahedron (i.e. subdividing the smallest tetrahedron's, into four smaller ones as explained in section \ref{sec: review of prereq}).

    Notice that these substitutions occur in disjoint regions, so they do not loop among themselves, whence proving that a knot $K$ in $S_n$ is isotypic to $i_n(K)$. Finally, because to add the iteration $S_1$ we only needed one extra subdivision, we indeed have that $S_{n+1}$ lies in $M_{n+1}$. This concludes the proof.
\end{proof} 
We immediately get the following
\begin{theorem}
Let $K$ be a knot, then we have
\begin{equation*}
    M(K) \le S(K),
\end{equation*}
whenever $S(K)$ is defined.
\end{theorem}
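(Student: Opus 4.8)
The plan is to deduce the theorem as an immediate consequence of the preceding lemma, which provides, for each $n$, a knot-type-preserving embedding $i_n$ of the one-skeleton of $S_n$ into the one-skeleton of $M_n$. The argument is essentially a one-line chase through the definitions of $M(K)$ and $S(K)$, so the bulk of the work has already been done in establishing the lemma; here I only need to package it correctly.

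First I would fix a knot $K$ for which $S(K)$ is defined, and set $n = S(K)$. By definition of $S(K)$, there is a closed path in the one-skeleton of $S_n$ that is isotopic to $K$. Next I would apply the lemma with this value of $n$: the embedding $i_n$ carries that closed path to a closed path in the one-skeleton of $M_n$, and the ``furthermore'' clause of the lemma guarantees that $i_n(K)$ is isotopic to $K$, so no spurious knotting is introduced. Hence $K$ is realized as a knot in the one-skeleton of $M_n$, which shows that $M(K) \le n = S(K)$.

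The only point requiring a word of care is that $M(K)$ is \emph{a priori} defined as the minimum over all iterations that contain $K$ in their one-skeleton, so exhibiting $K$ inside $M_n$ only yields the inequality $M(K)\le n$, not equality — which is exactly what is claimed. I would also note, for the reader's comfort, that points lying on the one-skeleton (the edges) of $M_n$ are never removed at later stages, so a knot found in the one-skeleton of $M_n$ genuinely lives in the Menger Sponge $M$; this is consistent with the convention used throughout Section \ref{sec: Positive answer menger}. There is no real obstacle in this step: the entire content sits in the lemma, and the theorem is simply its specialization to $n = S(K)$. If anything, the ``hard part'' is purely bookkeeping — making sure the definitions of $M(K)$ and $S(K)$ are applied with the same index $n$ and that the isotopy-preserving clause is invoked, rather than merely the existence of a topological embedding.
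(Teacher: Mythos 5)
Your proposal is correct and matches the paper's argument exactly: the paper states this theorem as an immediate consequence of the preceding lemma, and your write-up simply makes explicit the specialization to $n = S(K)$ together with the invocation of the knot-type-preserving clause. Nothing further is needed.
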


\begin{example}
We can verify $S(3_1)\ge 2$. Indeed, the one-skeleton of $S_1$ is a planar graph that is already on the surface of a topological sphere (which is the original tetrahedron). Thus, any knot that is produced by a path on it has genus $0$, and thus trivial. However, by inspection, we can see that $M(3_1) = 1$ (We leave to the interested reader the task of finding the trefoil in the one-skeleton of $M_1$). We thus see the inequality in the previous theorem can be sharp. 
\end{example}

A comparison between the moment of appearance in the Menger Sponge and the Arc index is given by the next corollary. 
\begin{corollary}\label{cor: index vs menger}
Let $K$ be a knot with Arc index $\alpha(K)$, then
\begin{equation*}
    M(K) \le \log_2(\alpha(K)) + 1,
\end{equation*}
or equivalently,
\begin{equation*}
    \alpha(K) \ge 2^{M(K) - 1}.
\end{equation*}
\end{corollary}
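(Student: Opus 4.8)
The plan is to read off the quantitative bound that is already implicit in the proof of Theorem \ref{kinsponge}. First I would revisit that proof: to embed a knot $K$ with $\alpha(K)=n$, it selects an iteration of the Cantor-set process possessing at least $n$ endpoints, and then builds the (shifted) Arc Presentation using $n$ of those endpoints as the coordinates of the vertical and horizontal rungs. Since the $k$-th iteration of the Cantor set has $2^{k+1}$ endpoints, we may take the iteration with index
\[
k_0 \;=\; \min\{\, m\ge 0 \;:\; 2^{m+1}\ge n \,\}\;=\;\lceil \log_2 n\rceil - 1 .
\]

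Next I would check that the entire construction of Theorem \ref{kinsponge}, carried out with these endpoints, actually lives inside $M_{k_0}$ and not merely inside $M$. The vertical rungs sit on the front face of $M_{k_0}$: their $x$-coordinates are endpoints of the $k_0$-th Cantor set, hence lie in $C$, so the relevant points lie in the Sierpinski carpet $s$, which is contained in $s_{k_0}$, the front face of $M_{k_0}$; symmetrically the horizontal rungs sit on the back face. The connecting straight segments lie in $M_{k_0}$ by the proof of Lemma \ref{lemma: pushing into menger sponge}: their $(x,y)$-coordinates are pairs of endpoints of the $k_0$-th Cantor set, hence lie in $C\times C\subseteq L_{k_0}$, and for a point of $L_{k_0}$ the whole vertical fibre already avoids the holes of $M_{k_0}$. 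Hence $K$ embeds in $M_{k_0}$, so $M(K)\le k_0$.

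Finally I would close with the elementary estimate $k_0=\lceil\log_2 n\rceil-1\le \log_2 n\le \log_2 n+1$ with $n=\alpha(K)$, which is the first displayed inequality of the corollary; rearranging gives the equivalent form $\alpha(K)\ge 2^{M(K)-1}$. The only genuinely substantive step is the middle one — confirming that the construction of Theorem \ref{kinsponge} fits in $M_{k_0}$ rather than in some larger iterate — and this is simply a matter of tracking which iterate of the Cantor set (equivalently, of the Cantor Dust) the chosen endpoints come from and invoking Lemma \ref{lemma: pushing into menger sponge}; the remaining arithmetic is routine.
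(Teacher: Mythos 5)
Your proposal is correct and follows exactly the route the paper intends: the corollary is stated there as an immediate consequence of the proof of Theorem \ref{kinsponge}, and you have simply made explicit the bookkeeping (the $k$-th Cantor iteration has $2^{k+1}$ endpoints, so $k_0=\lceil\log_2\alpha(K)\rceil-1$ suffices, and Lemma \ref{lemma: pushing into menger sponge} keeps the construction inside $M_{k_0}$). In fact your argument yields the slightly sharper bound $M(K)\le\lceil\log_2\alpha(K)\rceil-1$, of which the stated inequality is a weakening.
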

\begin{proof}
    This follows immediately from the proof of theorem \ref{kinsponge} above.
\end{proof}

\begin{example}
    The trefoil $3_1$ has $\alpha(3_1) = 5$. The above inequality is $M(3_1)\le 3.322...$. We know $M(3_1) = 1$.
\end{example}

Corollary \ref{cor: index vs menger} compares two different indices constructed out of a knot. On the one hand, we have the Arc index which requires the planarity of the Arc Presentation. Thus, for its construction, it wastes the flexibility that the three dimensionality of a knot gives. On the other hand, $M(K)$ takes advantage of it since the iterations of the Menger Sponge are three dimensional. What the inequality states, conceptually, is that indeed a lot of space is gained by this. However, this inequality was obtained by a very elementary embedding of a knot that still ignores a lot of paths within the iterations of the Menger Sponge (for example, $M_2$ is already very complicated). 

Thus, it is believable that the inequality is always strict (even if we consider integer part of the right hand side), and even that the gap between both sides grows. It is thus an interesting question to know what is the real asymptotic behaviour of $M(K)$.

\section{Final Question}\label{sec: Final Comments}

We conclude this paper with a question which we decided to emphasize by isolating it to its own section. We have seen that all knots exists within the iterations of the Menger Sponge, and that certain ones lie in the Sierpinski Tetrahedron. As we mentioned, we suspect actually all knots should be there as well. This leads to the our question of which we have no answer: \textit{is there a fractal, produced by an iterative process, that admits certain types of knots but not others? If so, what does that say of the fractal itself or of the families it avoids.} An example of such constructions would be quite impressive from our point of view.


\bibliographystyle{acm}
\bibliography{Bibliography}

\end{document}